\newtheorem{thm}{Theorem}[section] 
\newtheorem{lem}[thm]{Lemma} \newtheorem{prop}[thm]{Proposition} \theoremstyle{definition}
  \theoremstyle{remark} \newtheorem{rem}[thm]{Remark}
\numberwithin{equation}{section}  
\begin{document}

\author[Cristian Cazacu]{Cristian Cazacu}
\address{Cristian Cazacu
\hfill\break\indent
Faculty of Mathematics and Computer Science \&
\hfill\break\indent  The Research Institute of the University of Bucharest
\hfill\break\indent University of Bucharest 
\hfill\break\indent 14 Academiei Street 
\hfill\break\indent 010014 Bucharest, Romania.}
\hfill\break\indent \email{cristian.cazacu@fmi.unibuc.ro}

\thanks{ Correspondence address: Cristian
Cazacu, Department of Mathematics, Faculty of Mathematics and Computer Science, University of Bucharest,
010014 Bucharest, Romania. E-mail: {\tt cristian.cazacu@fmi.unibuc.ro}}

\title[An overview of Hardy and Rellich type inequalities]{The method of super-solutions in Hardy and Rellich type inequalities in the $L^2$ setting: an overview of well-known results and short proofs}

\maketitle
%{\centerline{\footnotesize\itshape Communicated by ...}}
\begin{abstract} 
In this survey we give a compact presentation of  well-known functional inequalities of Hardy and Rellich type  in the $L^2$ setting. In addition, we give some insights of their proofs by using standard and basic tools such as the method of super-solutions. \\	 
{\it 2020 Mathematics Subject Classification:} 35A23, 35R45, 35Q40, 35B09, 34A40, 34K38\\
{\it Key words:} Hardy inequalities; Rellich inequalities; optimal constants; super-solutions.  
 \end{abstract}

\section{Introduction}

This work is aimed to be an overview devoted  to well-known functional inequalities of Hardy and Rellich type in the $L^2(\mathbb{R}^d)$-setting, in the presence of singular potentials $V$ with singularities of the form $1/|x|^{\alpha}$, where $\alpha>0$ will be well precised later.  By $|x|$ we understand the euclidian norm of a vector $x\in \mathbb{R}^d$.

 We have two main objectives: i) to present one of the most famous results in the literature related to the subject; ii) to provide some classical  techniques which give rise to short proofs of the quoted results. 

Meeting the objectives could be extremely useful \textit{"at a first glance"} for readers who are not very familiarized with this topic. 

The literature on Hardy and Rellich type inequalities is very vast and therefore we will try to pick up the most common situations. We focus on inequalities in smooth domains and on singular potentials $V(x)=|x|^{\alpha}$  with critical homogeneity (which is $\alpha=-2$ in the case of standard Hardy inequality and $\alpha=-4$ for the standard Rellich inequality) as we will see later on.

\subsection{A piece of history of the Hardy inequality}\label{s1}
The history of the famous Hardy inequality has about 100 years.  It was in the 1920's when Godfrey Harold Hardy answered to a discrete inequality of David Hilbert with a new inequality which was also discrete, asserting that: for any $p>1$ and the positive numbers $a_i$, with $i=1, n$, such that $\sum_{n\geq 1}a_n^p$ is convergent then $\sum_{n\geq 1} A_n^p$ is also convergent (where $A_n=\sum_{i=1}^n a_i/n$) and it holds 
\begin{equation}\label{discrete Hardy}
\sum_{n=1}^{\infty} A_{n}^p \leq \left(\frac{p}{p-1}\right)^p \sum_{n=1}^{\infty} a_n^p.
\end{equation}
It is worth mentioning that \eqref{discrete Hardy} was initially stated in \cite{H1} in a weaker form, i.e. with a higher constant on the right hand side, namely  $\left(\frac{p^2}{p-1}\right)^p$. In the same paper Hardy stated the  continuous version (in the $L^p$-setting) of the above inequality, which is
\begin{equation}\label{integral Hardy}
\int_{a}^\infty \left(\frac{F(x)}{x}\right)^p dx \leq \left(\frac{p}{p-1}\right)^p \int_{a}^{\infty} f(x)^p dx, \quad \textrm{ with } F(x)=\int_{a}^x f dt,
\end{equation}
\textit{"for any $a$ and $f$ positive"}, said Hardy, avoiding to say something about the regularity, integrability or asymptotic behavior of the admissible function $f$. Basically, these results and auxiliary extensions  appeared  in the works \cite{H1, H2} and they were highlighted later in \cite{HLP}. Much more details about the origins  of the Hardy inequality and its first developments can be found in the book \cite{K1}.  The modern $L^p$ version of the 1-d Hardy inequality \eqref{integral Hardy} states that 
\begin{equation}\label{integral Hardy}
\int_{0}^\infty \left(\frac{F(x)}{x}\right)^p dx \leq \left(\frac{p}{p-1}\right)^p \int_{0}^{\infty} |F'(x)|^p dx, \quad  \forall F\in C_c^\infty(0, \infty), 
\end{equation}
which could be easily extended by Fatou Lemma to test functions $F$ in the Sobolev space $W_0^{1, p}(0, \infty)$.

 Since 1920's the Hardy inequality turned out to represent one of the most important tools in the analysis of Partial Differential Equations (PDE). For instance, Leray, in his celebrated paper \cite{L1} from 1934 when he studied the well-posedness of the weak solutions of the Navier-Stokes equation,  applied  a Hardy-type inequality involving partial derivatives:
 
 \begin{equation}\label{Hardy_Leray}
  \int_{\mathbb{R}^3} |\nabla u|^2 dx \geq \frac{1}{4}\int_{\mathbb{R}^3} \frac{u^2}{|x|^2} dx, \quad \forall u\in C_c^\infty(\mathbb{R}^3). 
 \end{equation}  
This is the extension of the $L^2$-version of \eqref{integral Hardy} to three dimensions. More general, \eqref{Hardy_Leray} can be extended to any dimension $d\geq 3$: 
\begin{equation}\label{standard HI}
\int_{\mathbb{R}^d} |\nabla u|^2 dx \geq \left(\frac{d-2}{2}\right)^2 \int_{\mathbb{R}^d} \frac{u^2}{|x|^2} dx, \quad \forall u\in C_0^\infty(\mathbb{R}^d),
\end{equation} 
where $\left(\frac{d-2}{2}\right)^2$ is the biggest admissible constant, i.e. the optimal one. This is what we usually call nowadays the \textit{standard/classical Hardy inequality} in $L^2$ form. 

\subsection{The Rellich inequality}

This started with the pioneering work of Franz Rellich \cite{R1} in the '50s when his inequality was first published in print form. The \textit{Rellich inequality} states that   
	\begin{equation}\label{Rellich}
\int_{\mathbb{R}^d} |\Delta u|^2 dx \geq \frac{d^2(d-4)^2}{16}\int_{\mathbb{R}^d}  \frac{u^2}{|x|^4} dx, \quad \forall u\in C_c^\infty(\mathbb{R}^d),
\end{equation}
 It seems that Rellich proved it earlier and delivered it in some lectures at New York University in 1953 which were published posthumously in \cite{R2}. 
 The Rellich inequality \eqref{Rellich} has undergone extensive further developments beginning with the $L^p$ version in \cite{DH}, in particular, being an important tool to study spectrum of biharmonic-type operators.

 \subsection{The Hardy-Rellich inequality} 
  The so-called classical {\it Hardy-Rellich inequality} is a  mixture of both inequalities \eqref{standard HI} and \eqref{Rellich}. It appeared as a consequence of the Hardy inequality applied to special classes of vector fields deriving from a potential gradient. More precisely, if we apply \eqref{standard HI} for each component of the potential gradient $\vec{U}=\nabla u$ and sum up all the terms we get 
  \begin{equation}\label{First HR inequality}
  \int_{\mathbb{R}^d} |\Delta u|^2 dx \geq \left(\frac{d-2}{2}\right)^2 \int_{\mathbb{R}^d} \frac{|\nabla u|^2}{|x|^2} dx, \quad \forall u\in C_c^\infty(\mathbb{R}^d).
  \end{equation}
  This is a consequence of the fact that after integrating by parts twice  we obtain 
  	 \begin{align*}
  \sum_{i=1}^d \int_{\mathbb{R}^d} |\nabla \partial_{x_i} u|^2 dx &=\sum_{i,j=1}^{d} \int_{\mathbb{R}^d} |\partial_{x_ix_j}^2 u|^2 dx=\int_{\mathbb{R}^d} |\Delta u|^2 dx. 
  \end{align*}   
   Notice that this is related to divergence free vector fields so important in fluid mechanics since in 3-d we have $\textrm{div} \vec{U}=0$. As we will see later the constant $\left(\frac{d-2}{2}\right)^2$ in \eqref{First HR inequality} is not optimal.

\section{The Hardy inequality in $L^2$ setting: basic results and proofs}
Next in the paper we will denote by $\Omega$ a smooth domain (open and connected set) in $\mathbb{R}^d$.  
In this section we mainly discuss the Hardy inequality 
\begin{equation}\label{Hardy_goal}
\mu>0, \quad  \int_\Omega |\nabla u|^2 dx \geq \mu \int_\Omega V u^2 dx, \quad \forall u\in C_0^\infty(\Omega),
\end{equation}
and analyze the biggest admissible constant $\mu$ in \eqref{Hardy_goal}, i.e. the \textit{optimal/best Hardy constant}  which is defined by 
\begin{equation}\label{optim1}
\mu^\star(V, \Omega):=\inf_{u\in C_{c}^{\infty}(\Omega)\setminus \{0\}}\frac{\int_{\Omega}|\nabla u|^2 dx}{\int_{\Omega} V u^2 dx}
\end{equation}  
We focus on two type of potentials:
\begin{enumerate}
	\item \footnote{For the sake of clarity we fix the singularity at the origin $x=0$  but all the forthcoming results are valid for more general potentials of the form $V(x)=1/|x-a|^2$, with the singularity located at an arbitrary point $a\in \mathbb{R}^d$.}   one singular inverse-square potential $V(x)=1/|x|^2$;
	\item  multipolar potentials of the form $V(x)= \sum_{1\leq i< j\leq n} \frac{|a_i-a_j|^2}{|x-a_i|^2|x-a_j|^2}$ with finite number of singular poles $a_1, \ldots, a_n\in \mathbb{R}^d$.
\end{enumerate}
I turns out that in general the best constant $\mu^\star(V, \Omega)$ depends on the structure of the potential $V$  and the geometry of $\Omega$. However, when there is no risk of confusion, we will write $\mu^\star$ or $\mu^\star(\Omega)$ instead of $\mu^\star(V, \Omega)$. 

We also analyze the attainability of $\mu^\star$ in \eqref{optim1}.  We will see that $C_c^\infty(\Omega)$ is not a good functional space for seeking for minimizers of $\mu^\star$ but  the \textit{energy space} $\mathcal{D}^{1,2}(\Omega)$ which is naturally defined as the completion of $C_0^\infty(\Omega)$ in the energy norm  $$\|u\|:=\int_\Omega |\nabla u|^2 dx.$$
It is straightforward that $\mu^\star$ in \eqref{optim1} can  also  be characterized by  
$$\mu^\star(V, \Omega)=\inf_{u\in \mathcal{D}^{1, 2}(\Omega)\setminus \{0\}}\frac{\int_{\Omega}|\nabla u|^2 dx}{\int_{\Omega} V u^2 dx}.$$  
The space $\mathcal{D}^{1,2}(\Omega)$ is the largest functional space where  inequality \eqref{Hardy_goal} makes sense. Clearly, $\mathcal{D}^{1,2}(\Omega)=H_0^1(\Omega)$ if $\Omega$ is a domain for which the Poincar\'{e} inequality applies (such as bounded domains) otherwise the inclusion $\mathcal{D}^{1,2}(\Omega)  \subset H_0^1(\Omega)$ is strict as it happens for the whole space $\mathbb{R}^d$.

Thus, it is then obvious that \eqref{Hardy_goal} holds in the range $\mu\leq  \mu^\star$. In other words the Hardy inequality \eqref{Hardy_goal}-\eqref{optim1} is equivalent to  
 $$H:=-\Delta -\mu V\geq 0, \quad \forall \mu\leq \mu^\star,$$
which means that the Hamiltonian $H$ is nonnegative in the sense of $L^2$ quadratic forms, i.e. $(Hu, u)_{L^2(\Omega)}\geq 0$, for all $u\in C_c^\infty(\Omega)$, where $(\cdot, \cdot)_{L^2(\Omega)}$ denotes the scalar product in $L^2(\Omega)$.   
 
In  applications Hardy inequalities are in general  
both important and difficult. In the spectral theory
of (magnetic) differential elliptic operators
it is of capital importance to obtain sharp lower
bounds for the corresponding quadratic forms in
order to control local singularities induced by
different perturbations.The boundedness from below for the self-adjoint extension of a symmetric operator of the form $H$ means 
$H\geq c$, where $c$ is a real constant, not necessary positive.   If $c<0$, writing in quadratic forms this is equivalent to the {\it weak} Hardy inequality  
\begin{equation}\label{weak Hardy}
 \int_\Omega |\nabla u|^2 dx \geq \mu \int_\Omega V u^2 dx+c \int_\Omega u^2 dx, \quad \forall u\in C_0^\infty(\Omega).
\end{equation}
In many circumstances the weaker inequality \eqref{weak Hardy} may suffices because the $L^2$ lower order term could be eventually absorbed so that it does not influence the main result of the problem.    In terms of quantum mechanics boundedness from below is related to the stability of matter.   
   For instance, in the case of the simplified Coulomb potential, we can obtain the lower bound (which is actually sharp)
$$-\Delta - \frac{Z}{|x|} \geq  -\frac{Z}{(d-2)^2}, \textrm{ in } L^2(\mathbb{R}^d),$$
due to the Hardy inequality \eqref{standard HI} as follows: 
\begin{align*}
-\Delta -\frac{Z}{|x|}&=\underbrace{-\Delta -\frac{\mu^\star}{|x|^2}}_{\geq 0}+ \frac{\mu^\star}{|x|^2}-\frac{Z}{|x|}\geq  -\frac{Z}{4\mu^\star}; \quad \mu^\star=\frac{(d-2)^2}{4}.   
\end{align*}

Important applications of Hardy inequalities in PDE appear in the well-posedness or regularity theory of solutions and especially for those PDE with singular perturbations. Also, they play a crucial role in the theory of function spaces, see e.g. \cite{Petru}. Other applications could be consulted for instance in  \cite{PhDCristi}, \cite{BV}.    
 
\subsection{The case of $V(x)=\frac{1}{|x|^2}$} In this case  we distinguish two situations for the Hardy inequality \eqref{Hardy_goal}-\eqref{optim1} with respect to the location of the singularity: 
\begin{itemize} 
	\item $x=0$ is located in $\Omega$ (interior singularity)
	\item $x=0$ is located on $\partial \Omega$ (boundary singularity)
\end{itemize} 

 \begin{thm}[Interior singularity]\label{th1} Let $d\geq 3$ and assume that $0\in \Omega$. Then 
\begin{equation}\label{classical_Hardy_inequality}
\int_\Omega |\nabla u|^2 dx \geq \mu^\star \int_\Omega \frac{u^2}{|x|^2} dx, \quad \forall u\in C_0^\infty(\Omega),
\end{equation}
and 
\begin{equation}\label{global_Hardy}
\mu^\star(\Omega)=\mu^\star(\mathbb{R}^d)=\frac{(d-2)^2}{4}.
\end{equation}
Moreover, $\mu^\star$ is  never attained in the energy space $\mathcal{D}^{1, 2}(\Omega)$.  
\end{thm}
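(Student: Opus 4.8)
The plan is to prove all three assertions through the \emph{method of super-solutions}, exploiting the explicit positive super-solution $\phi(x)=|x|^{-(d-2)/2}$ of the equation $-\Delta \phi = \mu^\star V \phi$ in $\mathbb{R}^d\setminus\{0\}$, where $V=1/|x|^2$ and $\mu^\star=(d-2)^2/4$. A direct computation based on $\Delta(|x|^{-\gamma})=\gamma(\gamma-d+2)|x|^{-\gamma-2}$ shows that $-\Delta(|x|^{-\gamma})/|x|^{-\gamma}=\gamma(d-2-\gamma)|x|^{-2}$, and the map $\gamma\mapsto \gamma(d-2-\gamma)$ is maximized at $\gamma=(d-2)/2$, where it equals $\mu^\star$. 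This simultaneously identifies the sharp constant and the extremal profile $\phi$.

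First I would establish \eqref{classical_Hardy_inequality} together with the lower bound $\mu^\star(\Omega)\geq (d-2)^2/4$. Writing $u=\phi v$ with $v=u/\phi$ for $u\in C_0^\infty(\Omega)$, expanding $|\nabla u|^2=v^2|\nabla\phi|^2+\phi\nabla\phi\cdot\nabla(v^2)+\phi^2|\nabla v|^2$, and integrating the middle term by parts on $\Omega\setminus\overline{B_\varepsilon(0)}$, I would obtain
$$\int_{\Omega\setminus B_\varepsilon}|\nabla u|^2\,dx=\int_{\Omega\setminus B_\varepsilon}\left(-\frac{\Delta\phi}{\phi}\right)u^2\,dx+\int_{\Omega\setminus B_\varepsilon}\phi^2|\nabla v|^2\,dx+R_\varepsilon,$$
where $R_\varepsilon$ collects the boundary contributions on $\partial B_\varepsilon$ (those on $\partial\Omega$ vanish because $u$ is compactly supported). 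The careful point is to check that $R_\varepsilon\to 0$ as $\varepsilon\to 0$: since $u$ is bounded near the origin, the integrand on $\partial B_\varepsilon$ is of order $|x|^{-1}u^2$, so $R_\varepsilon=O(\varepsilon^{d-2})$, which is negligible for $d\geq 3$. Passing to the limit and using $-\Delta\phi/\phi=\mu^\star V$ yields the sharp \emph{remainder identity}
$$\int_\Omega|\nabla u|^2\,dx=\mu^\star\int_\Omega\frac{u^2}{|x|^2}\,dx+\int_\Omega\phi^2\left|\nabla\left(\frac{u}{\phi}\right)\right|^2\,dx,$$
from which \eqref{classical_Hardy_inequality} and $\mu^\star(\Omega)\geq (d-2)^2/4$ follow at once, the remainder being nonnegative.

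Next I would prove optimality and \eqref{global_Hardy}. Since $C_0^\infty(\Omega)\subseteq C_0^\infty(\mathbb{R}^d)$ (extend by zero), taking the infimum over the smaller class gives $\mu^\star(\Omega)\geq\mu^\star(\mathbb{R}^d)$, while the super-solution argument above applies verbatim to any domain containing $0$, so $\mu^\star(\Omega)\geq (d-2)^2/4$ as well. For the matching upper bound I would exhibit a minimizing sequence supported in a small ball $B_\rho(0)\subset\Omega$, namely the truncated powers $u_\varepsilon=|x|^{-(d-2)/2+\varepsilon}\chi$, $\chi\in C_0^\infty(B_\rho)$, with $\varepsilon\to 0^+$; a direct estimate of the two integrals (both of order $\varepsilon^{-1}$ with the same leading coefficient) shows the Rayleigh quotient tends to $(d-2)^2/4$, whence $\mu^\star(\Omega)\leq (d-2)^2/4$. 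As these test functions are admissible for \emph{every} domain containing $0$, the common value $(d-2)^2/4$ is independent of $\Omega$, proving \eqref{global_Hardy}.

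Finally, for non-attainability I would extend the remainder identity from $C_0^\infty(\Omega)$ to all of $\mathcal{D}^{1,2}(\Omega)$ by density in the energy norm. If $\mu^\star$ were attained at some $u_0\neq 0$, the identity would force $\int_\Omega\phi^2|\nabla(u_0/\phi)|^2\,dx=0$, hence $u_0/\phi$ constant and $u_0=c\,\phi=c\,|x|^{-(d-2)/2}$ for some $c\neq 0$. But $|\nabla\phi|^2\sim|x|^{-d}$ near the origin is not locally integrable, so $\phi\notin\mathcal{D}^{1,2}(\Omega)$, a contradiction; thus $\mu^\star$ is never attained. The main obstacle throughout is the rigorous treatment of the singularity at $0$: both in verifying that the boundary terms $R_\varepsilon$ genuinely vanish, and in legitimizing the passage of the remainder identity to the full energy space, where the rigidity argument takes place.
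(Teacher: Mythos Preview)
Your proof is correct and follows essentially the same route as the paper: the remainder identity obtained via the substitution $u=\phi v$ with $\phi=|x|^{-(d-2)/2}$ (the paper's Proposition~\ref{identi} and identity~\eqref{H_impr}) yields both the inequality and the non-attainability, and a regularization/perturbation of $\phi$ furnishes the minimizing sequence. The only cosmetic differences are that the paper first offers a separate ``two lines'' proof of the bare inequality via $\mathrm{div}(x/|x|^2)$ and Cauchy--Schwarz before invoking the remainder identity, and it uses the smooth regularization $u_\epsilon=(|x|^2+\epsilon^2)^{-(d-2)/4}\theta(|x|)$ rather than your power shift $|x|^{-(d-2)/2+\epsilon}\chi$.
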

It is worth to mention that the singular term $\int_\Omega u^2/|x|^2 dx$ in \eqref{classical_Hardy_inequality} is finite due to the fact that $1/|x|^2\in L_{loc}^1(\mathbb{R}^d)$ as long as $d\geq 3$. 

The statement of Theorem \ref{th1} appears in a form or another in the majority of papers on this topic. A reference paper which subsequently created a lot of interest and developments on the subject is \cite{BV}.   

There are many proofs of \eqref{classical_Hardy_inequality} in Theorem \ref{th1} but  here we present a very simple one.
\begin{proof}[A ''two lines" proof of  \eqref{classical_Hardy_inequality}] Applying one integration by parts  and the Cauchy-Schwarz inequality we successively obtain
	\begin{align*}
	\int_\Omega \frac{u^2}{|x|^2} dx &= \frac{1}{d-2} \int_\Omega \textrm{div}\left(\frac{x}{|x|^2} \right) u^2 dx =-\frac{2}{d-2} \int_\Omega \frac{u}{|x|^2} x\cdot \nabla  u dx\\
	&\leq \frac{2}{d-2}\left(\int_\Omega \frac{u^2}{|x|^2} dx \right)^{1/2} \left(\int_\Omega |\nabla u|^2 dx \right)^{1/2}.
\end{align*}
	Taking the squares in the extreme terms above and simplifying we precisely get \eqref{classical_Hardy_inequality}. The same proof works in the whole space $\mathbb{R}^d$ when working with smooth  compactly supported functions. 
	\end{proof}
\begin{rem}
	If we want to be totally rigorous in the "two lines proof" we need to avoid the singularity when doing integration by parts. An alternative option for that is to "regularize" the potential and to mimic the above proof starting with the term $\int_{\Omega} \frac{u^2}{|x|^2+\epsilon^2}$ and then pass to the limit as $\epsilon\searrow 0$. We let the details to the reader.   
\end{rem}
 Of course, from  \eqref{classical_Hardy_inequality} we obviously have $\mu^\star(\Omega)\geq (d-2)^2/4$ and $\mu^\star(\mathbb{R}^d)\geq (d-2)^2/4$. In order to show \eqref{optim1} we need to design a minimizing sequence to approach the constant $(d-2)^2/4$.  Let $\epsilon>0$ and let $R>0$ be such that the ball of radius $2R$ centered at the origin, denoted by $B_{2R}(0)$, is a subset of  $\Omega$.  We define
 \begin{equation}\label{min_seq}
 u_\epsilon(x)=(|x|^{2}+\epsilon^2)^{-\frac{d-2}{4}}\theta(|x|), 
 \end{equation} 
 where $\theta:[0, \infty)\rightarrow [0,1]$ is a smooth cut-off function such that $\theta(r)=1$ for $r\in [0, R]$ and $\theta(r)=0$ for $r\geq 2R$.  Obviously, $u_\epsilon\in C_c^\infty(\Omega)$. Moreover, a useful exercise shows that 
\begin{equation}\label{min_lim_seq}
 \frac{\int_{\Omega}|\nabla u_\epsilon|^2 dx}{\int_{\Omega}u_\epsilon^2/|x|^2 dx} \searrow \frac{(d-2)^2}{4}, \textrm{ as } \epsilon\searrow 0.
 \end{equation}
 Therefore, \eqref{classical_Hardy_inequality} and \eqref{min_lim_seq} imply \eqref{global_Hardy} (since a  function  $u \in C_c^\infty(\Omega)$ could be trivially extended to a function $\overline{u}\in C_c^\infty(\mathbb{R}^d)$). 
 
 The inconvenient of the "two lines" proof is the loss of evidence concerning the attainability of the best constant. However, one can notice that the difference of the terms in \eqref{classical_Hardy_inequality} could be written as (after integration by parts)
 \begin{align}\label{H_impr}
 \int_{\Omega} |\nabla u|^2 dx - \frac{(d-2)^2}{4} \int_{\Omega} \frac{|u|^2}{|x|^2} dx &=\int_{\Omega} \left|\nabla u+\frac{d-2}{2} \frac{x}{|x|^2} u \right|^2 dx\nonumber\\
 &=\int_{\Omega} \left|\nabla \left(u |x|^{\frac{d-2}{2}}\right)\right|^2 |x|^{2-d} dx,  \quad \forall u \in C_c^\infty(\Omega) 
 \end{align} 
 By density, \eqref{H_impr} can be extended to functions in $\mathcal{D}^{1, 2}(\Omega)$. If the constant $\mu^\star=(d-2)^2/4$ were  attained by a function $u\in \mathcal{D}^{1, 2}(\Omega)$ then it should satisfy \eqref{H_impr}. In that case, the left hand side is zero and therefore $u=C|x|^{-(d-2)/2}$ for some real constant $C$. However, one can check that $u\not \in \mathcal{D}^{1, 2}(\Omega)$ (both terms in \eqref{classical_Hardy_inequality} are infinite). Contradiction. So, the constant is not attained.

\begin{rem}
	When $d=2$  then \eqref{classical_Hardy_inequality} has no sense because $\mu^\star=0$ and the singular term is not integrable in general because $1/|x|^2\not\in L_{loc}^1(\mathbb{R}^2)$) (on the right hand side we may have the nedetermination $0\cdot \infty$).
\end{rem} 

\begin{rem}
	Observe that the "guess" of the minimizing sequence in \eqref{min_seq} is related to the singular function $|x|^{-(d-2)/2}$ found when discussing the attainability of the best constant $\mu^\star$ above. Indeed, $u_\epsilon$ is a regularization of $|x|^{-(d-2)/2}$ because $u_\epsilon$ converges  to $|x|^{-(d-2)/2}$ as $\epsilon$ tends to zero,  pointwise (except the origin) in a small neighborhood of the origin.
\end{rem}
\begin{rem}
	Identity \eqref{H_impr} seems to be "magical" and difficult to find, but in fact it has to do with the fact that the function $|x|^{-(d-2)/2}$ is a  solution of the operator $H:=-\Delta -\mu^\star/|x|^2$ associated to the quadratic form induced by the Hardy inequality \eqref{classical_Hardy_inequality}. More exactly we have, 
	\begin{equation}\label{extremal functions}
	\phi(x)=|x|^{-\frac{d-2}{2}} \textrm{ verifies } \underbrace{-\Delta \phi -\mu^\star\frac{\phi}{|x|^2}}_{=H\phi}=0, \quad \forall x\neq 0.   
	\end{equation}  
\end{rem}

In view of the remarks above next we give the statement of a more general result, the so-called the {\it method of super-solutions}, which works to prove large variety of functional inequalities, particularly both optimal Hardy and Rellich inequalities. 

\subsection{The method of super-solutions}

Rougly speaking it says that {\it "a second order elliptic operator $H$ is nonnegative in a domain $\Omega$ if there exists a positive super-solution solution $\phi$ in $\Omega$ for $H \phi\geq 0$"} (see,  e.g. \cite{A, Al, P}).  To be more specific, for our necessities in this section we need the following lemma 
\begin{lem}[adapted from \cite{Davies}]\label{l1}
	 Let $\phi$ be a positive function in $\Omega$ with $\phi \in C^2(\Omega\setminus\{0\})$ and  let $W\in L_{loc}^1(\Omega)$ be a continuous function 
	on $\Omega\setminus\{0\}$ such that	
	\begin{equation}\label{id1}(-\Delta - W )\phi(x) \geq 0, \quad \forall x\in \Omega\setminus\{0\}.
	\end{equation}
Then, $-\Delta - W \geq 0$, in the sense of quadratic forms, i.e. 
\begin{equation}\label{ineq1}
\int_\Omega |\nabla u|^2 dx \geq \int_{\Omega} W u^2 dx, \quad \forall u\in C_c^{\infty}(\Omega).
\end{equation} 	
\end{lem}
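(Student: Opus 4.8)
The plan is to use the classical \emph{ground state substitution} (also known as Agmon's trick or the ground state representation), which turns the super-solution hypothesis into a manifestly nonnegative remainder term. First I would factor out the super-solution by writing, for a fixed $u\in C_c^\infty(\Omega)$, the quotient $v:=u/\phi$ (well defined since $\phi>0$), so that $u=\phi v$. Expanding the gradient gives
$$|\nabla u|^2 = \phi^2 |\nabla v|^2 + 2\phi v\, \nabla\phi\cdot\nabla v + v^2 |\nabla\phi|^2,$$
and the cross term can be rewritten compactly as $\phi\,\nabla\phi\cdot\nabla(v^2)$.

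Next I would integrate the cross term by parts. Using $\textrm{div}(\phi\nabla\phi)=|\nabla\phi|^2+\phi\Delta\phi$, the two $v^2|\nabla\phi|^2$ contributions cancel and one is left with the identity (valid at least away from the origin)
$$\int_\Omega |\nabla u|^2\,dx = \int_\Omega \left(-\frac{\Delta\phi}{\phi}\right) u^2\,dx + \int_\Omega \phi^2 \left|\nabla\left(u/\phi\right)\right|^2\,dx.$$
The super-solution hypothesis \eqref{id1} reads precisely $-\Delta\phi\geq W\phi$ on $\Omega\setminus\{0\}$, hence $-\Delta\phi/\phi\geq W$ since $\phi>0$. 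Substituting this lower bound and discarding the nonnegative last term then yields \eqref{ineq1} at once. Note the structural analogy with identity \eqref{H_impr}, which is exactly this representation for the Hardy super-solution $\phi(x)=|x|^{-(d-2)/2}$.

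The main obstacle will be to justify the integration by parts across the singularity at the origin, since $\phi$ is only assumed to be $C^2$ on $\Omega\setminus\{0\}$ and the flux $\phi\nabla\phi$ may blow up there. I would handle this in two stages. First I would establish the identity for test functions $u$ supported in $\Omega\setminus\{0\}$, where $\phi$ is smooth and all the integrations by parts are legitimate with no boundary contribution. Then I would remove the singularity by a cut-off approximation: multiply $u$ by a radial cut-off $\eta_\epsilon$ that vanishes on $B_\epsilon(0)$ and equals $1$ outside $B_{2\epsilon}(0)$, apply the inequality to $\eta_\epsilon u\in C_c^\infty(\Omega\setminus\{0\})$, and let $\epsilon\searrow 0$. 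The hypotheses $W\in L_{loc}^1(\Omega)$ together with the local control on $\phi$ (in the standard applications $\phi$ behaves like a power of $|x|$ near the origin) ensure that both the extra gradient terms created by $\nabla\eta_\epsilon$ and the spurious boundary integral over $\partial B_\epsilon(0)$ tend to zero, so the inequality \eqref{ineq1} survives the limit for every $u\in C_c^\infty(\Omega)$.
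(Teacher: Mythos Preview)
Your approach is essentially identical to the paper's: the ground state substitution $u=\phi v$ and the resulting identity are exactly Proposition~\ref{identi}, and your cut-off step is precisely the concrete implementation of the density statement $\overline{C_c^\infty(\Omega\setminus\{0\})}^{\|\cdot\|}=\overline{C_c^\infty(\Omega)}^{\|\cdot\|}$ (in the $\mathcal{D}^{1,2}$-norm) that the paper invokes. One small sharpening: once you pass the \emph{inequality} (not the identity) to $\eta_\epsilon u$, the limit needs only $\int|\nabla(\eta_\epsilon u)|^2\to\int|\nabla u|^2$ (zero capacity of a point in $d\geq 2$) and $\int W(\eta_\epsilon u)^2\to\int W u^2$ (dominated convergence via $W\in L^1_{loc}$); no ``local control on $\phi$'' or boundary flux over $\partial B_\epsilon(0)$ is actually required.
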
    
This is a consequence of  Proposition \ref{identi} below. Indeed, by taking $\phi$ in Proposition \ref{identi} as in \eqref{id1} we get \eqref{ineq1} for test functions $u\in C_c^\infty(\Omega\setminus\{0\})$. The conclusion follows by showing the closure identity 
$$\overline{C_c^\infty(\Omega\setminus\{0\})}^{\|\cdot\|}=\overline{C_c^\infty(\Omega)}^{\|\cdot\|}, \quad \|u\|=\int_{\Omega}|\nabla u|^2 dx$$
(see e.g. \cite{CZ} for the details of this density argument).  
\begin{prop}\label{identi}
 Let $\phi$ be a positive function in $\Omega$ with $\phi \in C^2(\Omega\setminus\{0\})$.
	%
	% to whom we associate the closed
	%set
	%\begin{equation}\label{set}
	%\mathcal{C}_{\phi}=\overline{\{x\in \Omega \ | \ \phi(x)=0 \textrm{
	%or } \phi(x)=\infty \}}.
	%\end{equation}
	% Moreover, we assume  $\phi, \phi^{-1}\in L_{\textrm{loc}}^{1}(\Omega)$ such that
	% $ \nabla \phi/ \phi\in H_{\textrm{loc}}^1(\Omega)$.
	Then it holds that
	\begin{equation}\label{geni}
	\int_{\Omega} \left( |\nabla u|^2 +\frac{\Delta \phi}{\phi}u^2 \right) dx =\int_\Omega \Big|\nabla
	u- \frac{\nabla \phi}{\phi} u\Big|^2 dx =\int_\Omega \phi^2|\nabla (u
	\phi^{-1})|^2 dx,
	\end{equation}
	for all $u \in C_c^\infty(\Omega\setminus \{0\})$.
\end{prop}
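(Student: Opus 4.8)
The plan is to establish the two equalities in \eqref{geni} separately, working throughout on the support of $u$, which is a compact subset of $\Omega\setminus\{0\}$. This is precisely the region where $\phi$ is positive and of class $C^2$, so every integrand appearing below is continuous there and the divergence theorem applies with no boundary contributions.

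First I would dispatch the rightmost equality, which is purely algebraic. Applying the product rule to $u\phi^{-1}$ gives
\begin{equation*}
\nabla(u\phi^{-1}) = \phi^{-1}\nabla u - u\phi^{-2}\nabla\phi = \phi^{-1}\left(\nabla u - \frac{\nabla\phi}{\phi}\,u\right),
\end{equation*}
so that $\phi^2\,|\nabla(u\phi^{-1})|^2 = |\nabla u - \frac{\nabla\phi}{\phi}u|^2$ pointwise on $\textrm{supp}\,u$. Integrating this pointwise identity yields the second equality immediately, with no integration by parts required.

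For the first equality I would expand the square in the middle integrand,
\begin{equation*}
\left|\nabla u - \frac{\nabla\phi}{\phi}\,u\right|^2 = |\nabla u|^2 - 2\,\frac{u}{\phi}\,\nabla u\cdot\nabla\phi + \frac{|\nabla\phi|^2}{\phi^2}\,u^2 ,
\end{equation*}
so that it only remains to check that the last two terms integrate to $\int_\Omega \frac{\Delta\phi}{\phi}u^2\,dx$. The key manipulation is to write $2u\,\nabla u = \nabla(u^2)$, turning the cross term into $-\int_\Omega \phi^{-1}\nabla\phi\cdot\nabla(u^2)\,dx$; integrating by parts transfers the derivative onto the vector field $\nabla\phi/\phi$ and produces $\int_\Omega u^2\,\textrm{div}(\nabla\phi/\phi)\,dx$. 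Invoking the elementary identity $\textrm{div}(\nabla\phi/\phi) = \Delta\phi/\phi - |\nabla\phi|^2/\phi^2$ and adding back the remaining term $\int_\Omega \frac{|\nabla\phi|^2}{\phi^2}u^2\,dx$, the two $|\nabla\phi|^2/\phi^2$ contributions cancel exactly, leaving $\int_\Omega \frac{\Delta\phi}{\phi}u^2\,dx$ and hence the first equality.

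The only delicate point is the integration by parts, and the hypothesis $u\in C_c^\infty(\Omega\setminus\{0\})$ is exactly what makes it legitimate: the support of $u$ stays away both from $\partial\Omega$ and from the origin, where $\phi$ (and with it $\nabla\phi/\phi$) may be singular. Consequently $\nabla\phi/\phi$ is $C^1$ on a neighborhood of $\textrm{supp}\,u$ and the divergence theorem yields no boundary flux. This is precisely why the identity is formulated on $C_c^\infty(\Omega\setminus\{0\})$ and only afterwards extended to $C_c^\infty(\Omega)$ through the density argument recalled below Lemma \ref{l1}.
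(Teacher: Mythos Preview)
Your proof is correct and follows essentially the same approach as the paper: expand the square and integrate the cross term by parts. The only cosmetic difference is organizational---the paper introduces the substitution $u=\phi v$ and expands $|\nabla u|^2$ (so the integration by parts runs through $\mathrm{div}(\phi\nabla\phi)$), whereas you expand the middle expression directly and integrate by parts via $\mathrm{div}(\nabla\phi/\phi)$; these are the same computation viewed from the two ends of the identity.
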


%Here, we are interested to see how this result   Halitonian $H:=-\Delta -\lambda /|x|^2$.  
%
%
%The proof of \eqref{geni} can be done using integration by parts as shown at the end of this section. In
%particular, \eqref{geni} can be extended to test functions $u\in H_0^1(\Omega)$ since  $C_0^1(\Omega\setminus \{x_1, \ldots, x_n\})$ is dense
%in $H_0^1(\Omega)$.

\begin{proof}[Proof of Proposition \ref{identi}]
	For a given  $u\in C_{c}^{\infty}(\Omega\setminus\{0\})$ we introduce the transformation  $u=\phi v$.
	%where $\phi $
	%satisfies \eqref{supersolution}.
	Then we get
	$$|\nabla u|^2 = |\phi|^2 v^2 +\phi^2 |\nabla v|^2 +2\nabla \phi \cdot \nabla v \phi v. $$
	Applying  integration by parts we successively obtain
	\begin{align}\label{pema2}
	\int_\Omega |\nabla u|^2dx &=\int_\Omega |\nabla \phi|^2 v^2 dx + \int_\Omega \phi^2 |\nabla
	v|^2 dx +\frac{1}{2}\int_\Omega \nabla (\phi^2)\cdot  \nabla (v^2) dx\nonumber\\
	&=\int_\Omega |\nabla \phi|^2 v^2 dx + \int_\Omega \phi^2 |\nabla v|^2 dx-\frac{1}{2}
	\int_\Omega v^2 \mathrm{div}(2\phi \nabla \phi ) dx\nonumber\\
	&= \int_\Omega |\nabla \phi|^2 v^2 dx+  \int_\Omega \phi^2 |\nabla \phi|^2 dx -\frac{1}{2}
	\int_\Omega v^2 (2\phi \Delta \phi +2|\nabla \phi|^2)dx\nonumber\\
	& =\int_\Omega |\nabla v|^2 \phi^2 dx - \int_\Omega \phi \Delta \phi v^2 dx.
	\end{align}
	Therefore we finally have
	$$  \int_\Omega |\nabla u|^2 dx= \int_\Omega |\nabla v|^2 \phi^2 dx-\int_\Omega \frac{\Delta \phi}{\phi} u^2 dx.$$
	In conclusion we get  \eqref{geni}.
	%$$\int_\Omega |\nabla u|^2dx \geq \int_\Omega -\frac{\Delta \phi}{\phi} u^2 dx,$$
	%and therefore,  by \eqref{lema2} and \eqref{supersolution} we obtain
	%we finish the proof.
\end{proof}
Now we are in condition to "guess" the function $\phi(x)=|x|^{-(d-2)/2}$ in \eqref{extremal functions} which helped to prove the Hardy inequality \eqref{classical_Hardy_inequality}. We apply Lemma \ref{l1} for $W(x):=\mu/|x|^2$ and $\phi(x):= |x|^{\alpha}$ (it is quite natural to play with a radial function because the singularity has a radial structure) where $\alpha$ and $\mu$ will be precise later.   Computing, we have 
$$(-\Delta -W)\phi(x) = (-\alpha(\alpha+d-2)-\mu)|x|^{\alpha-2}, \quad \forall x\neq 0.$$
So, in view of Lemma \ref{l1}, if $-\alpha(\alpha+d-2)-\mu\geq 0$ we have
\begin{equation}\label{guess}
\int_\Omega |\nabla u|^2 dx\geq \mu \int_\Omega \frac{u^2}{|x|^2} dx. 
\end{equation}
With this argument, the biggest possible $\lambda$ in \eqref{guess} is 
$$\mu^\star=\max_{\alpha\in \mathbb{R}}\{-\alpha (\alpha +d-2)\}=\frac{(d-2)^2}{4},$$
which is obtained for $\alpha=-(d-2)/2$. Therefore we obtain the optimal pair $$(W(x), \phi(x))=\left(\frac{(d-2)^2}{4|x|^2}, |x|^{-(d-2)/2}\right),$$ 
and this  gives us another proof for the optimal Hardy inequality with interior singularity.

It turns out that when the singularity $x=0$ is located on the boundary $\partial \Omega$ of $\Omega$ then the best constant $\mu^\star(\Omega)$ improves with respect to the case of an interior singularity and its value depends on the global geometry of $\Omega$. 
 \begin{thm}[Boundary singularity]\label{th2} Let $d\geq 2$ and assume that $0\in \partial\Omega$. 
 \begin{enumerate}
 	\item\label{i1} 	Then 
	\begin{equation}\label{classical_Hardy_inequality2}
	\int_\Omega |\nabla u|^2 dx \geq \mu^\star(\Omega) \int_\Omega \frac{u^2}{|x|^2} dx, \quad \forall u\in C_0^\infty(\Omega),
	\end{equation}
	where
	\begin{equation}\label{global Hardy}
	\frac{(d-2)^2}{4}<\mu^\star(\Omega)\leq \frac{d^2}{4}.
	\end{equation}
%\end{thm}
%Moreover,
%\begin{thm}[boundary singularity]. Let $d\geq 2$ and assume that $0\in \partial\Omega$. Then
		\item\label{i2} 	If $\Omega$ is a convex domain (or more general, contained in a half-space\footnote{see next page for the definition of the half-space $\mathbb{R}_+^d$ with respect to the last component $x_d$. The result remains valid for a half-space defined with respect to any of the components.}) then 
		$$\mu^\star(\Omega)=\frac{d^2}{4},$$ 
		which  is not attained in the energy space $\mathcal{D}^{1, 2}(\Omega)$.   
		\item\label{i3} For any domain $\Omega$ there exists $r>0$ depending on $\Omega$ (small enough) such that
		$$\mu^\star(\Omega \cap B_r(0))=\frac{d^2}{4}.$$
		\item\label{i4} 
		 If $\Omega$ is bounded there exists a constant $c\in \mathbb{R}$ such that  
	\begin{align}\label{oeq44}
	\quad H:=-\Delta-\frac{d^2}{4|x|^2}\geq c .
	\end{align}
	\item\label{i5}  If $\Omega$ is bounded and included in a half-space then \eqref{oeq44} holds for some $c>0$.\\
	\item\label{i6}  There exist domains  $\Omega$ for which $\mu^\star(\Omega)<d^2/4$.\\
	\item\label{i7}  If $\Omega$ is bounded and $\mu^\star(\Omega)< d^2/4$ then $\mu^\star(\Omega)$  is attained in $H_0^1(\Omega)$.\\
		\end{enumerate} 
\end{thm}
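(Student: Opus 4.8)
The unifying tool throughout is the method of super-solutions (Lemma \ref{l1}), now fed with a profile adapted to a boundary singularity. After a rotation we may assume the tangent half-space to $\Omega$ at $0$ is $\mathbb{R}^d_+=\{x_d>0\}$. Since an admissible super-solution must be positive inside $\Omega$ yet feel the boundary, the radial guess $|x|^{\alpha}$ is replaced by $\phi(x)=x_d|x|^{-d/2}$. A direct computation gives $-\Delta\phi=\tfrac{d^2}{4}|x|^{-2}\phi$, so $(-\Delta-\tfrac{d^2}{4|x|^2})\phi=0$, and $\phi>0$ exactly on $\{x_d>0\}$. Lemma \ref{l1} with $W=\tfrac{d^2}{4|x|^2}$ therefore yields $\mu^\star(\Omega)\ge d^2/4$ whenever $\Omega\subset\mathbb{R}^d_+$; for convex $\Omega$ one applies this on the supporting half-space at $0$. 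This is the lower bound in \ref{i2}. (For $d=2$ the potential is still in $L^1_{loc}(\Omega)$ because the singularity sits on $\partial\Omega$, so the lemma applies.)

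For the matching upper bound I would test the quotient \eqref{optim1} with a concentrating family modelled on $\phi$, of the form $u_\epsilon(x)=x_d(|x|^2+\epsilon^2)^{-d/4}\theta(|x|)$ with $\theta$ a cut-off as in \eqref{min_seq}; since $\partial\Omega$ is smooth it flattens to $\mathbb{R}^d_+$ at the scale $\epsilon\to0$, so the Rayleigh quotient decreases to $d^2/4$ exactly as in \eqref{min_lim_seq}. This gives $\mu^\star(\Omega)\le d^2/4$ for every smooth $\Omega$ (the upper bound in \ref{i1}), hence equality in \ref{i2}. Non-attainment in \ref{i2} then follows verbatim from the argument after \eqref{H_impr}: the identity of Proposition \ref{identi} with this $\phi$ forces any minimizer to be a multiple of $\phi=x_d|x|^{-d/2}$, which has infinite energy and so is not in $\mathcal{D}^{1,2}(\Omega)$. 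The remaining bound $\mu^\star(\Omega)\ge(d-2)^2/4$ in \ref{i1} is monotonicity, $C_c^\infty(\Omega)\subset C_c^\infty(\mathbb{R}^d)$, combined with Theorem \ref{th1}; its strict version is postponed to the localization step below.

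Item \ref{i3} is the local statement $\mu^\star(\Omega\cap B_r)=d^2/4$ for small $r$. Rescaling $B_r$ to the unit ball sends $\Omega\cap B_r$ to a set converging to $\mathbb{R}^d_+\cap B_1$, a bounded convex set, on which \ref{i2} already gives $d^2/4$; the lower bound is made rigorous by a curvature-corrected super-solution, replacing $x_d$ by the regularized distance to $\partial\Omega$ and absorbing the $O(r)$ mean-curvature error by shrinking $r$. This local sharp constant is the input to the two weak-Hardy items. For \ref{i4} I would localize in the spirit of the IMS formula: splitting $u=\chi u+(1-\chi)u$ with $\chi\in C_c^\infty(B_r)$ equal to $1$ near $0$, the piece $\chi u$ is controlled by \ref{i3} with the sharp constant $d^2/4$, while on the support of $1-\chi$ the potential $|x|^{-2}$ is bounded; discarding the nonpositive localization error $-\int|\nabla\chi|^2u^2\ge -C\int u^2$ yields $H\ge c$ for some finite, possibly negative, $c$. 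For bounded $\Omega\subset$ half-space (item \ref{i5}) item \ref{i2} already gives $H\ge0$; the strict improvement $H\ge c>0$ comes from the identity of Proposition \ref{identi}, which reduces the claim to a weighted Poincar\'e inequality $\int_\Omega\phi^2|\nabla v|^2\ge c\int_\Omega\phi^2 v^2$ for $v=u/\phi$. On a bounded domain this holds with $c>0$ because the Dirichlet condition on $\partial\Omega$ obstructs the constant competitor $v\equiv1$; the same inequality genuinely fails on the unbounded half-space, which is exactly why both boundedness and the half-space hypothesis are needed.

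Item \ref{i6} I would settle by exhibiting a non-convex domain, for instance one whose boundary has strictly negative mean curvature at $0$ (locally the exterior of a ball tangent at the origin), and a test function exploiting the large-scale geometry rather than concentration at $0$: the favourable curvature lowers the Dirichlet energy enough to push the quotient strictly below $d^2/4$. Finally, item \ref{i7} is an attainment result by compactness, in the spirit of Brezis--Nirenberg. I would take a minimizing sequence normalized by $\int u_n^2|x|^{-2}=1$, extract a weak $H_0^1$-limit $u$, and account for the possible loss of mass: away from $0$ the embedding $H_0^1(\Omega)\hookrightarrow L^2(|x|^{-2}dx)$ is compact, so any defect must concentrate at the singularity, but a concentrated bubble there carries the sharp local weight $d^2/4$, and the strict gap $\mu^\star(\Omega)<d^2/4$ rules this out, forcing $u\ne0$ and hence a minimizer. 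I expect the two hardest points to be the curvature-corrected super-solution underlying \ref{i3} (on which the strict lower bound in \ref{i1} and all of \ref{i4} rest) and the exclusion of boundary concentration in \ref{i7}; the strict inequality $\mu^\star(\Omega)>(d-2)^2/4$ in \ref{i1} then follows by the same localization, comparing the local value $d^2/4$ near $0$ against the sub-critical behaviour of $|x|^{-2}$ away from it.
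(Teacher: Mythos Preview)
Your approach is largely aligned with the paper's: the key super-solution $\phi(x)=x_d|x|^{-d/2}$ for item \ref{i2}, the curvature-corrected super-solution for item \ref{i3}, the IMS-type localization for \ref{i4}, and the concentration-compactness outline for \ref{i7} all match what the paper sketches or cites. A few differences and one genuine weak point are worth noting.

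For the upper bound $\mu^\star(\Omega)\le d^2/4$ in \ref{i1} you build a concentrating test sequence $u_\epsilon=x_d(|x|^2+\epsilon^2)^{-d/4}\theta$ and invoke boundary flattening. The paper instead deduces it directly from \ref{i3}: since $\Omega\cap B_r\subset\Omega$, monotonicity gives $\mu^\star(\Omega)\le\mu^\star(\Omega\cap B_r)=d^2/4$ with no test function needed. For optimality on the half-space the paper's minimizing sequence regularizes at infinity ($u_\epsilon=x_d$ on $B_1$ and $x_d|x|^{-d/2+\epsilon}$ outside) rather than at the origin as you do; both are valid. For \ref{i3} the paper supplies the explicit super-solution $\phi(x)=\rho(x)\,|x|^{-d/2}e^{(1-d)\rho(x)}\bigl(\log\tfrac{1}{|x|}\bigr)^{1/2}$ with $\rho=d(\cdot,\partial\Omega)$, which is exactly the curvature correction you anticipate.

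Your treatment of \ref{i6} is the weakest link. You propose a domain with ``strictly negative mean curvature at $0$'' and assert that the curvature lowers the Dirichlet energy. But mean curvature is a second-order effect: for any smooth boundary the blow-up at $0$ is still a half-space, and your own item \ref{i3} then forces $\mu^\star(\Omega\cap B_r)=d^2/4$ regardless of the curvature sign. What actually produces $\mu^\star(\Omega)<d^2/4$ is that $\Omega$ is \emph{not contained in any half-space through the origin}; the paper realizes this via smooth domains approximating a cone of aperture strictly larger than a half-space, using the explicit Hardy constant on cones (Pinchover--Tintarev \cite{PT}). Your exterior-of-a-ball example may in fact work---precisely because it fails to sit in any half-space through $0$---but the mechanism you invoke, local curvature, is not the correct one and would not by itself yield the strict inequality.
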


Theorem \ref{th2} reflects the fact that in the case of a boundary singularity  the best Hardy constant depends on the entire shape of the domain. 

Theorem \ref{th2} was completed in a series of works in the last 2 decades to whom we will refer in the following and sketch some ideas of the original proofs or alternative ones. 

\noindent {\it Short discussion on the results of Theorem \ref{th2}}.  First, it was proved in \cite{FTT} that $\mu^\star(\mathbb{R}_{+}^d)=d^2/4$ where $\mathbb{R}_{+}^d$ is the half-space in $\mathbb{R}^d$ defined by $\mathbb{R}_{+}^d:=\{x=(x_1, \ldots, x_d)\in \mathbb{R}^d \ |\ x_d>0 \}$. To our knowledge this  was subsequently extended to  domains included in a half-space (roughly speaking, this is item \eqref{i2}) independently in  the  works \cite{C_CRAS, C_arxiv} and  \cite{FM, Fall1}, respectively. The quoted authors also improved this by  proving Hardy-Poincar\'{e} type inequalities with positive reminder terms in bounded domains included in a half-space, which particularly imply item \eqref{i5}.   Probably the most surprising result of the theorem is item \eqref{i3} which was proved in \cite{Fall1}. It tells us that, locally near the singularity the best constant does not depend on the geometry of the domain.     As a consequence of this local result one can easily show the upper bound in \eqref{global Hardy} and item \eqref{i4} by  a localization argument using the partition of unity technique. The latter item says that the
constant $d^2/4$ is optimal, up to lower order terms in $L^2(\Omega)$-norm.  The idea of item \eqref{i6} is based on  approximations with conical domains near the singularity and it was proved in different presentations both in \cite{C_arxiv} and \cite{FM} with the help of the characterization of the Hardy constant in conical domains (see, e.g. \cite{PT}). The proof of  item \eqref{i7} was inspired from a proof originally from \cite{BM}. This and the non-attainability of the Hardy constant in the whole space imply the strict inequality in \eqref{classical_Hardy_inequality}. 

In the following we will give some details on item \eqref{i2} based on the pioneering result $\mu^\star(\mathbb{R}_+^d)=d^2/4$ proved in \cite{FTT} and the nice and unexpected result in item \eqref{i3} due to \cite{Fall1}. To simplify the presentation, our proofs below may be different in some aspects from the original proofs.

\begin{proof}[Sketch of proofs of items \eqref{i2} and \eqref{i3} in Theorem \ref{th2}] 
Assume first that $\mu^\star(\mathbb{R}_+^d)=d^2/4$. By homogeneity,  it is easy to see that the Hardy constant is invariant under dilatations, i.e. $\mu^\star(\Omega)=\mu^\star(\lambda\Omega)$, with $\lambda>0$, and invariant under rotations $T$ centered at $x=0$, i.e. $\mu^\star(\Omega)=\mu^\star(T(\Omega))$. Moreover, it is straightforward that $\mu^\star$ is nonincreasing with respect to set inclusion, i.e. for any $\Omega_1\subset\Omega_2$ we have $\mu^\star(\Omega_1)\geq \mu^\star(\Omega_2)$.  These facts ensure that $\mu(\mathbb{R}_+^d)=\mu^\star(B)$ for any ball $B$ containing the origin $x=0$ on its boundary. Then, just by comparison arguments one has that $\mu^\star(\Omega)=d^2/4$ for a domain $\Omega$ included in a half-space.

Now, let us prove the first result $\mu^\star(\mathbb{R}_+^d)=d^2/4$. 

For that we can apply Lemma \ref{l1}. Consider $W(x)=\mu/|x|^2$ and $\phi(x)=x_d|x|^{\alpha}$. Then, by direct computations we get 
$$\left(-\Delta -\frac{\mu}{|x|^2}\right)\phi(x)= (-\alpha(\alpha+d)-\mu)|x|^{\alpha-2}.$$
If we choose $\mu:=\max_{\alpha\in \mathbb{R}}\{-\alpha(\alpha+d)\}=d^2/4$ obtained by $\alpha=-d/2$, we deduce the admissible pair $(W(x), \phi(x))=(d^2/(4|x|^2), x_d |x|^{-d/2})$ which implies 
$$	\int_\Omega |\nabla u|^2 dx \geq \frac{d^2}{4} \int_\Omega \frac{u^2}{|x|^2} dx, \quad \forall u\in C_0^\infty(\Omega).$$
In consequence, $\mu^\star(\mathbb{R}_+^d)\geq d^2/4$. To prove the optimality we  just have to regularize and localize the solution $\phi(x)=x_d|x|^{-d/2}$ near the origin. For instance, we can choose as a minimizing sequence in the energy space $\mathcal{D}^{1, 2}(\mathbb{R}_+^d)$ (not necessary in $C_c^\infty(\mathbb{R}_+^d)$) the sequence 
\begin{equation}
u_\epsilon (x)= \left\{\begin{array}{ll}
x_d, & \textrm{ if } |x|\leq 1\\
x_d|x|^{-d/2+\epsilon}, & \textrm{ if } |x|\geq 1.\\
\end{array}\right.
\end{equation}
For the proof of item \eqref{i3} it suffices to consider $\Omega$ a domain for which the points on $\partial \Omega$ satisfy $x_d\leq 0$ in the neighborhood of the origin (e.g. $\Omega=\mathbb{R}^d\setminus B_1(-e_d)$, where $e_d=(0, \ldots ,0,1)$ is the $d$-th canonical vector of $\mathbb{R}^d$) and find a positive super-solution $\phi$ i.e. 
$$\left(-\Delta-\frac{d^2}{4|x|^2}\right)\phi(x)\geq 0,$$
in a small neighborhood $\Omega \cap B_{r}(0)$ of the origin $x=0$.    
		Such a super-solution was first built in \cite{Fall1} in terms of a local parametrization of the boundary $\partial \Omega$ near the origin  (it requires  tools like exponential
		maps and Fermi coordinates). In \cite[Appendix A]{C_JFA} we proposed a simplified expresion of the supersolution, i.e. 
		$$\phi(x)=\rho(x)|x|^{-d/2} e^{(1-d)\rho(x)}\left(\log \frac{1}{|x|}\right)^{1/2},$$
	where $\rho(x)=d(x, \partial \Omega):=\inf_{y\in \partial \Omega}|x-y|$ denotes the distance function from a point $x\in \overline{\Omega}$ to the boundary $\partial \Omega$.	Detailed computations can be found in \cite[Theorem 2.3.3]{PhDCristi}.     
\end{proof}

\subsection{The case $V(x)= \sum_{1\leq i< j\leq n} \frac{|a_i-a_j|^2}{|x-a_i|^2|x-a_j|^2}$}

Throughout this section we discuss the qualitative properties of
Schr\"{o}dinger (Hamiltonian) operators $-\Delta - \mu V(x)$, with inverse square potentials of the form
$$V(x):= \sum_{1\leq i< j\leq n} \frac{|a_i-a_j|^2}{|x-a_i|^2|x-a_j|^2},$$
for fixed configurations of singular poles $a_i\in \mathbb{R}^d$, $i=1, n$, with $n\geq 2$. 
%improving some  results already known in the literature. 
We have been motivated by the previous works on multipolar potentials of type $\tilde V(x):=\sum_{i=1}^{n}
\alpha_i/|x-a_i|^2$ (with $\alpha_i\in\mathbb{R}$ and the singular poles $a_i\in \mathbb{R}^d$ being fixed)  which are associated with the interaction
of a finite number of electric dipoles where the interaction
among the poles depends on their relative partitions and the intensity of the singularity in
each of them (see, e.g. \cite{Levy}). They describe molecular systems such as the Hartree-Fock model (cf. \cite{Ca}) consisting of $n$ nuclei
of unit charge located at a finite number of points $a_1,\ldots, a_n$ and of $n$ electrons,  where Coulomb multi-singular potentials
arise in correspondence with the interactions between the electrons and the fixed nuclei.

In the case of the multi-singular potential $\tilde V(x)=\sum_{i=1}^{n}
\alpha_i/|x-a_i|^2$ , the study of positivity of
the quadratic functional
\begin{equation}\label{eq1}
\mathcal{T}%_{\alpha_1, \ldots, \alpha_n, a_1, \ldots,a_n}
[u]:=\int_\Omega |\nabla u|^2 dx - \mu \int_\Omega
\tilde V(x)u^2 dx
\end{equation}
is much more intricate. To our knowledge, it is a challenging open problem even in the whole space $\mathbb{R}^d$ to determine the best constant $\mu^\star(\Omega, \tilde V)$  which makes $\mathcal{T}$ positive. 
%affects in the estimates of
%\eqref{eq1}.

%To the best of our knowledge,  for
Despite of that, some partial results are known. Particularly, in \cite{FT} it was proved  that when  
$\Omega=\mathbb{R}^d$ and $\mu=1$, $\mathcal{T}$ is positive if and only if
$\sum_{i=1}^{n}\alpha_i^{+}\leq (d-2)^2/4$ for any configuration of
the poles $a_1, \ldots, a_n$, where $\alpha^{+}=\max\{\alpha, 0\}$.
Conversely, if $\sum_{i=1}^{n}\alpha_i^{+}> (d-2)^2/4$, there exist
configurations
$a_1, \ldots, a_n$ for which $\mathcal{T}$ is negative (see also \cite{FTT} for complementary results). %Moreover,
%there exists a configuration of poles such that $\mathcal{D}$ is
%positive if and only if $\alpha_i^{+} \leq  (d-2)^2/4$ for all $i$
%and $\sum_{i=1}^{n}\alpha_i^{+}\leq  (d-2)^2/4$.
These results were improved later in \cite{BDE} where the authors showed that for any $\mu \in (0, (d-2)^2/4]$  and any
configuration $a_1, a_2, \ldots, a_n\in \mathbb{R}^d$, $n\geq 2$,
there is a nonnegative constant $K_n< \pi^2$ such that for all $u\in C_{c}^{\infty}(\mathbb{R}^d)$
\begin{equation}\label{este}
  \frac{K_n+(n+1) \mu}{M^2}\int_{\mathbb{R}^d}
u^2 dx+ \int_{\mathbb{R}^d} |\nabla u|^2 dx -\mu \sum_{i=1}^{n} \int_{\mathbb{R}^d}
\frac{u^2}{|x-a_i|^2}dx \geq 0,
\end{equation}
where $M$ denotes $M:=\min_{i\neq j}|a_i-a_j|/2$. The original proofs
of the above results require a partition of unity
technique, localizing the singularities and the classical Hardy inequality \eqref{classical_Hardy_inequality} in which the singularity $x=0$ is replaced with the singular poles $a_i$.  The "weak" inequality \eqref{este} emphasizes  that
we can reach the critical singular mass $(d-2)^2/(4|x-a_i|^2)$ at
any singular pole $a_i$ paying the prize of adding a lower order term in
$L^2$-norm with positive sign on the left hand side ("bad sign"). 

Besides, using the so-called "expansion of the square" method, the
authors in \cite{BDE} proved the multipolar inequality without lower
order terms with "bad sign":
\begin{align}\label{pushu}
\int_{\mathbb{R}^d} |\nabla u|^2 dx \geq \frac{(d-2)^2}{4 n^2}&  \int_{\mathbb{R}^d} V u^2 dx\nonumber\\
&+\frac{(d-2)^2}{4n} \sum_{i=1}^{n}\int_{\mathbb{R}^d} \frac{u^2}{|x-a_i|^2} dx, \quad \forall u\in C_c^\infty(\mathbb{R}^d),
\end{align}
for any fixed configuration $a_1, \ldots, a_n\in \mathbb{R}^d$, with $a_i \neq a_j$ for $i\neq j$.
% In consequence we have
% \begin{align}\label{pushu1}
% \int_{\mathbb{R}^d} |\nabla u|^2 \dx \geq \frac{(d-2)^2}{4 n^2}& \sum_{1\leq i<j\leq n} \int_{\mathbb{R}^d} \frac{|a_i-a_j|^2}{|x-a_i|^2|x-a_j|^2} u^2 \dx,  \quad \forall u\in H^1(\mathbb{R}^d).
% \end{align}
Particularly, 
\begin{align}\label{pushu2}
\int_{\mathbb{R}^d} |\nabla u|^2 dx \geq \frac{(d-2)^2}{4 n^2}&  \int_{\mathbb{R}^d} V u^2 dx,  \quad \forall u\in C_c^\infty(\mathbb{R}^d).
\end{align}
Unfortunately, the constant $\mu^\star(\mathbb{R}^d, \tilde V)$ is not known  neither for $\tilde V$ which corresponds to $\alpha_i=1$ for all $i=1,n$.  We only know that $\mu^\star(\mathbb{R}^d, \tilde V)\geq (d-2)^2/(4n)$ which is a consequence of the Hardy inequality \eqref{classical_Hardy_inequality} applied for any singularity  $a_i$ but it is also visible from \eqref{pushu}. Nevertheless, motivated also by \eqref{pushu} we can make a compromise and analyze $\mu^\star(\Omega, \tilde V)$.    It occurs that the constant $(d-2)^2/(4n^2)$ in \eqref{pushu2} is not optimal.  

To answer to the optimality issue for $V$, we distinguish ans analyze two main cases: i) all the singularities of $V$ are in the interior of $\Omega$; ii) all the singularities of $V$ are located on the boundary $\partial \Omega$ of $\Omega$. We  first have
\begin{thm}[interior singularities]\label{1absprop1}
	Assume that $\Omega\subset \mathbb{R}^d$, $d\geq 3$, is a bounded domain such that  $a_1, \ldots, a_n\in \Omega$, $n\geq 2$.
\begin{enumerate}
	\item \label{j1}	It holds that
	\begin{equation}\label{better}
	\int_{\mathbb{R}^d} |\nabla u|^2 dx \geq \frac{(d-2)^2}{n^2} \int_{\mathbb{R}^d}
	V u^2 dx, \quad \forall u\in
	C_c^{\infty}(\mathbb{R}^d).
	\end{equation}
and the constant is optimal, i.e.  $\mu^\star(\mathbb{R}^d)=\frac{(d-2)^2}{n^2}.$
\item\label{j2} Besides,
	\begin{equation}\label{equality.constant}
	\left\{ \begin{array}{ll}
		\mu^\star(\Omega) =  \frac{(d-2)^2}{n^2}, & if\  n=2, \\ [10pt]
		\frac{(d-2)^2}{n^2} < \mu^\star(\Omega)\leq \frac{(d-2)^2}{4n-4}, &  if\  n\geq 3.\\
	\end{array}\right.
	\end{equation}
	Moreover, if $n=2$ then \eqref{equality.constant} is verified in any open domain $\Omega$ (not necessary bounded).
	\item \label{j3}  For any constant $\mu< (d-2)^2/(4n-4)$,
	there exists a finite constant  $c_\mu\in \mathbb{R}$ such that 
		$$-\Delta-\mu V \geq c_\mu.$$  
	\item\label{j4}  If $\mu^\star(\Omega)< (d-2)^2/(4n-4)$  then $\mu^\star(\Omega)$ is attained in $H_0^1(\Omega)$.
	\item\label{j5} If $n=2$  then $\mu^\star(\Omega)$  is never attained in $\mathcal{D}^{1, 2}(\Omega)$.
	\end{enumerate}
\end{thm}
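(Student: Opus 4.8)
The plan is to deduce the whole theorem from the method of super-solutions (Lemma \ref{l1}) applied to the single inspired ansatz
$$\phi(x) = \prod_{i=1}^n |x - a_i|^{-\frac{d-2}{n}}.$$
First I would compute $\frac{\Delta\phi}{\phi} = \Delta\log\phi + |\nabla\log\phi|^2$. Since $\Delta\log|x-a_i| = (d-2)|x-a_i|^{-2}$ off the poles, the first term contributes $-\frac{(d-2)^2}{n}\sum_i|x-a_i|^{-2}$, while $|\nabla\log\phi|^2=\frac{(d-2)^2}{n^2}\big|\sum_i\frac{x-a_i}{|x-a_i|^2}\big|^2$ produces a diagonal part together with the cross sum $\sum_{i\neq j}\frac{(x-a_i)\cdot(x-a_j)}{|x-a_i|^2|x-a_j|^2}$. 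The decisive algebraic step is the polarization identity $(x-a_i)\cdot(x-a_j)=\tfrac12\big(|x-a_i|^2+|x-a_j|^2-|a_i-a_j|^2\big)$: feeding it in, the cross sum collapses to $(n-1)\sum_i|x-a_i|^{-2}-V(x)$, so that all the isotropic single-pole terms cancel \emph{exactly} against the Laplacian contribution — precisely because the exponent is $\frac{d-2}{n}$ — leaving $\frac{\Delta\phi}{\phi}=-\frac{(d-2)^2}{n^2}V$. Thus $\big(-\Delta-\frac{(d-2)^2}{n^2}V\big)\phi=0$ pointwise away from the poles, and Lemma \ref{l1} yields \eqref{better}. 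This is the heart of the proof; the remaining items are consequences.

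For the optimality asserted in item \eqref{j1} I would analyze the decay of this exact solution: $\phi\sim|x|^{-(d-2)}$ at infinity and $\phi\sim|x-a_i|^{-(d-2)/n}$ at each pole. A routine integrability check shows $|\nabla\phi|\in L^2$ and $\int V\phi^2<\infty$ when $n\geq 3$ (the pole exponent $(d-2)/n$ is then subcritical), so $\phi\in\mathcal{D}^{1,2}(\mathbb{R}^d)$; integrating the equation $\big(-\Delta-\frac{(d-2)^2}{n^2}V\big)\phi=0$ against $\phi$ shows its Rayleigh quotient equals $\frac{(d-2)^2}{n^2}$, forcing $\mu^\star(\mathbb{R}^d)=\frac{(d-2)^2}{n^2}$. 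For $n=2$ the gradient fails to be square-integrable at the poles, so instead I would truncate and regularize $\phi$ to produce a minimizing sequence attaining the same value in the limit.

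For item \eqref{j2} the lower bound $\mu^\star(\Omega)\geq\frac{(d-2)^2}{n^2}$ is immediate from \eqref{better} together with the monotonicity $\Omega\subset\mathbb{R}^d$. The upper bound comes from a local test-function analysis at one pole, where $V(x)\sim(n-1)|x-a_i|^{-2}$: functions modeled on $|x-a_i|^{-(d-2)/2}$ and concentrating at $a_i$ give Rayleigh quotients tending to $\frac{(d-2)^2/4}{n-1}$, whence $\mu^\star(\Omega)\leq\frac{(d-2)^2}{4n-4}$. For $n=2$ the two bounds coincide (both equal $\frac{(d-2)^2}{4}$) and neither step uses boundedness, giving equality in any domain as claimed. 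For $n\geq 3$ they are distinct since $n^2>4(n-1)$, and the strict inequality $\mu^\star(\Omega)>\frac{(d-2)^2}{n^2}$ I would obtain by comparison with the whole-space extremal: the positive minimizer $\phi$ on $\mathbb{R}^d$ is strictly positive everywhere, so its restriction cannot be admissible on a bounded $\Omega$, and a genuine minimizer at the value $\frac{(d-2)^2}{n^2}$ would have to coincide with $\phi$ — a contradiction. Establishing this strictness rigorously is the main technical obstacle once the super-solution identity is in hand.

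Items \eqref{j3}--\eqref{j5} follow familiar templates. For \eqref{j3} I would localize via a partition of unity into small balls around each pole and a region bounded away from every pole; on each ball the hypothesis $\mu<\frac{(d-2)^2}{4n-4}$ keeps the operator strictly below the local critical threshold for the singularity $(n-1)|x-a_i|^{-2}$, away from the poles $V$ is bounded, and summing the localized estimates while absorbing the (bounded) commutator terms yields $-\Delta-\mu V\geq c_\mu$. For \eqref{j4}, when $\mu^\star(\Omega)<\frac{(d-2)^2}{4n-4}$ a minimizing sequence cannot leak mass by concentrating at any pole (each pole is strictly subcritical), so the compact embedding $H_0^1(\Omega)\hookrightarrow L^2(\Omega)$ on the bounded domain upgrades weak to strong convergence and produces a minimizer. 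For \eqref{j5} I would mimic the non-attainment argument behind \eqref{H_impr} in Theorem \ref{th1}: Proposition \ref{identi} with this $\phi$ gives, for any minimizer $w$, the identity $\int_\Omega|\nabla w|^2-\frac{(d-2)^2}{4}\int_\Omega Vw^2=\int_\Omega\phi^2|\nabla(w\phi^{-1})|^2$, whose vanishing left-hand side forces $w=c\phi$; but for $n=2$ one has $|\nabla\phi|\sim|x-a_i|^{-d/2}\notin L^2$ near the poles, so $\phi\notin\mathcal{D}^{1,2}(\Omega)$, a contradiction.
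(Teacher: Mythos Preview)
Your proposal is correct and follows exactly the approach of the paper's sketch: the same super-solution $\phi(x)=\prod_{i=1}^n|x-a_i|^{-(d-2)/n}$ satisfying $\big(-\Delta-\tfrac{(d-2)^2}{n^2}V\big)\phi=0$ via Lemma~\ref{l1}/Proposition~\ref{identi}, followed by the same template (local blow-up at one pole for the upper bound $\tfrac{(d-2)^2}{4n-4}$, partition-of-unity localization for item~\eqref{j3}, concentration--compactness for item~\eqref{j4}, and the identity \eqref{geni} for item~\eqref{j5}). In fact your write-up supplies the explicit polarization computation and the integrability analysis of $\phi$ that the paper only alludes to, and you correctly flag the strict inequality in item~\eqref{j2} for $n\geq 3$ as the one point requiring genuine additional work beyond the super-solution identity.
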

Item \eqref{j1}  of Theorem \ref{1absprop1} was proved in \cite{CZ}. Later we extended this result and proved items \eqref{j2}-\eqref{j5} in \cite{C_CCM}. The second statement of item \eqref{j2} is the most surprising result of this theorem showing that in the case of interior singularities there is a gap between $\mu^\star(\Omega)$ and $\mu^\star(\mathbb{R}^d)$ when  $\Omega\subset \mathbb{R}^d$ is bounded and  $n\geq 3$. 

\begin{proof}[Sketch of proof of Theorem \ref{1absprop1}] Item \eqref{j1} is a consequence of the method of super-solutions, i.e. Lemma \ref{l1}. Indeed, we can check that 
	$$\left(W(x):=\frac{(d-2)^2}{n^2}V,  \quad \phi(x):=\prod_{i=1}^{n} |x-a_i|^{-(d-2)/n}\right)$$
	is an admissible pair since 
	$$\left(-\Delta- \frac{(d-2)^2}{n^2}V\right)\phi(x)=0, \quad \forall x\neq a_i.$$
We have to point out that Lemma \ref{l1} and Proposition \ref{identi} must be slightly modified because we have to avoid a finite number of singular points not just one. The rest of items follow more or less similar ideas from Theorem \ref{th2}.   We leave the details to the reader. 	
	\end{proof}

It is a popular fact that when switching from interior to boundary
singularities, the Hardy constant increases. It is also the case here. 

\begin{thm}[boundary singularities, cf. \cite{C_CCM}]\label{te2}
	Assume $\Omega\subset \mathbb{R}^d$, $d\geq 2$, is a domain  such that $a_1, \ldots, a_n \in \Gamma$, $n\geq 2$. In addition, for items \eqref{k2}-\eqref{k4} below we assume $\Omega$ to be bounded. 
	We successively have
	\begin{enumerate}
\item\label{k1} If $\Omega$ is either a ball, the exterior of a ball, or a  half-space in $\mathbb{R}^d$, $d\geq 2$ then 
		\begin{equation}\label{cond3}
		\mu^\star(\Omega)=\frac{d^2}{n^2}.
		\end{equation}
		If $\Omega$ is a ball, the constant $\mu^\star(\Omega)$ in \eqref{cond3} is attained in $H_0^1(\Omega)$ if and only if $n\geq 3$.\\
		If $\Omega$ is the exterior of a ball then $\mu^\star(\Omega)$ is attained in $\mathcal{D}^{1, 2}(\Omega)$ when $d\geq 3$ and $n\geq 3$, whereas if $\Omega$ is a half-space in $\mathbb{R}^d$ then $\mu^\star(\Omega)$ is attained in $\mathcal{D}^{1, 2}(\Omega)$ when $n\geq 3$.
		\item\label{k2} It holds that 
		\begin{equation}\label{bounds.bond}
		\frac{(d-2)^2}{n^2} <   \mu^\star(\Omega)\leq  \frac{d^2}{4n-4}.
		\end{equation}
		\item\label{k3}  For any constant $\mu< d^2/(4n-4)$,
		there exists  $c_\mu\in \mathbb{R}$ such that 
		$$-\Delta-\mu V \geq c_\mu.$$  
		\item\label{k4}  In addition, if $\mu^\star(\Omega)< d^2/(4n-4)$  then $\mu^\star(\Omega)$ is attained in $H_0^1(\Omega)$.
	\end{enumerate}
\end{thm}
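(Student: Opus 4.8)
\section*{Proof proposal for Theorem \ref{te2}}

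The plan is to treat the four items in order, using the method of super-solutions (Lemma \ref{l1}) as the engine for all the lower bounds and reserving the delicate work for the attainability statements in item \eqref{k1}. Guided by the interior computation of Theorem \ref{1absprop1} (where $\phi=\prod_i|x-a_i|^{-(d-2)/n}$) and by the single-pole boundary solution $x_d|x|^{-d/2}$ of Theorem \ref{th2}, I would first establish item \eqref{k1} for the half-space $\mathbb{R}_+^d$ by trying the product ansatz
\begin{equation*}
\phi(x)=x_d\prod_{i=1}^n |x-a_i|^{-d/n},
\end{equation*}
with all poles lying on $\{x_d=0\}$. Computing $\Delta\phi/\phi$ via $\Delta x_d=0$, the identity $(x-a_i)\cdot(x-a_j)=\tfrac12(|x-a_i|^2+|x-a_j|^2-|a_i-a_j|^2)$, and the fact that $a_{i,d}=0$, should give $(-\Delta-\tfrac{d^2}{n^2}V)\phi=0$; the exponent $-d/n$ is precisely the value that cancels the spurious $\sum_i|x-a_i|^{-2}$ contributions. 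Lemma \ref{l1} then yields $\mu^\star(\mathbb{R}_+^d)\geq d^2/n^2$, and a localized, regularized version of $\phi$ (as in the proof sketch of Theorem \ref{th2}) furnishes a matching minimizing sequence, so $\mu^\star(\mathbb{R}_+^d)=d^2/n^2$. For the ball and its exterior I would transfer the construction by a Möbius inversion centered at a boundary point, which maps these domains onto half-spaces; tracking the super-solution through the conformally covariant Laplacian reproduces an admissible pair with the same constant $d^2/n^2$.

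For item \eqref{k2}, the non-strict lower bound $\mu^\star(\Omega)\geq (d-2)^2/n^2$ is immediate from the global inequality \eqref{better} of Theorem \ref{1absprop1}, which holds for every $u\in C_c^\infty(\mathbb{R}^d)$ and hence for $u\in C_c^\infty(\Omega)$ irrespective of where the poles sit. Strictness I would deduce exactly as in Theorem \ref{th2}: each pole now lies on $\partial\Omega$, where the local constant improves to the larger boundary value $d^2/n^2>(d-2)^2/n^2$, so equality with the interior value is impossible. The upper bound $\mu^\star(\Omega)\leq d^2/(4n-4)$ I would obtain by concentrating test functions near a single pole $a_k$: since $|x-a_j|\to|a_k-a_j|$ for $j\neq k$, one has $V(x)\sim(n-1)/|x-a_k|^2$ there, and because $\Omega$ is smooth near $a_k\in\partial\Omega$ the single-pole boundary constant is $d^2/4$, so the Rayleigh quotient tends to $\tfrac{1}{n-1}\cdot\tfrac{d^2}{4}=\tfrac{d^2}{4n-4}$.

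Items \eqref{k3} and \eqref{k4} follow the template already used in Theorems \ref{th2} and \ref{1absprop1}. For \eqref{k3} I would use a partition of unity splitting $\Omega$ into small neighborhoods of each pole, where item \eqref{k1} (applied to the tangent half-space) controls the quadratic form up to the local constant $d^2/(4n-4)$, together with a region where $V$ is bounded and contributes only a harmless $L^2$ lower-order term; absorbing these gives $-\Delta-\mu V\geq c_\mu$ for every $\mu<d^2/(4n-4)$. For \eqref{k4}, the strict gap $\mu^\star(\Omega)<d^2/(4n-4)$ is precisely the threshold that prevents any minimizing sequence from concentrating at the poles, so the direct method in $H_0^1(\Omega)$ yields weak convergence to a nontrivial minimizer.

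The hard part will be the attainability dichotomy in item \eqref{k1}. Near a pole the extremal behaves like $x_d|x-a_k|^{-d/n}$, so its Dirichlet energy scales as $\int_0 r^{\,d-1-2d/n}\,dr$, which converges if and only if $d-2d/n>0$, i.e. $n\geq 3$; this is the exact origin of the stated threshold. This integrability at the poles must then be reconciled with the decay at infinity (the source of the extra hypothesis $d\geq 3$ for the exterior domain) and with the compatibility of the inversion with membership in $\mathcal{D}^{1,2}(\Omega)$ versus $H_0^1(\Omega)$ for each of the three model geometries. Disentangling these three simultaneous integrability conditions, and confirming that the limit of the minimizing sequence is genuinely an extremal rather than a concentrated defect, is the main technical obstacle.
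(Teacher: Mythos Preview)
Your strategy matches the paper's: both proofs rely on the method of super-solutions (Lemma \ref{l1}/Proposition \ref{identi}) with the product ansatz combining a boundary-vanishing factor with $\prod_i|x-a_i|^{-d/n}$, and both reduce items \eqref{k2}--\eqref{k4} to the same localization/partition-of-unity and compactness arguments already used in Theorems \ref{th2} and \ref{1absprop1}. The one substantive difference is in item \eqref{k1}: you build the super-solution first in the half-space as $\phi(x)=x_d\prod_i|x-a_i|^{-d/n}$ and then propose to transfer it to the ball and its exterior via M\"obius inversion, whereas the paper writes down the ball super-solution directly,
\[
\phi(x)=(r^2-|x-c|^2)\prod_{i=1}^{n}|x-a_i|^{-d/n},\qquad \Omega=B_r(c),
\]
checks that it solves $(-\Delta-\tfrac{d^2}{n^2}V)\phi=0$ pointwise, and observes that it already lies in $H_0^1(\Omega)$ precisely when $n\geq 3$, giving both the constant and the minimizer in one stroke. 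Your conformal-transfer route is correct in principle but requires tracking how the multipolar potential $V$ and the energy space transform under the Kelvin map; the paper's explicit formula sidesteps this and makes the attainability threshold $n\geq 3$ immediately visible (via the same local integrability computation you wrote down). Either way the content is the same; the direct construction is just shorter.
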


\begin{proof}[Sketch of proof of Theorem \ref{te2}] 
	The main novelty of this theorem with respect to classical Hardy-type inequalities is concerned with the attainability of $\mu^\star(\Omega)$ in some particular cases and more precisely in balls. 
	Again, item \eqref{j1} is a consequence of Lemma \ref{l1} and Proposition \ref{identi} adapted to a finite number of singularities. Indeed, if $\Omega=B_r(c)$ is a ball of radius $r$ centered at a point $c\in \mathbb{R}^d$ we can check that 
	$$\left(W(x):=\frac{d^2}{n^2}V,  \quad \phi(x):=(r^2-|x-c|^2)\prod_{i=1}^{n} |x-a_i|^{-d/n}\right)$$
	is an admissible pair since 
	$$\left(-\Delta- \frac{d^2}{n^2}V\right)\phi(x)=0, \quad \forall x\neq a_i.$$ As we mentioned before, the novelty here is that the function $\phi$ belongs to the energy space $H_0^1(\Omega)$ and it is a minimizer of the best constant, i.e.  
	$$\frac{\int_\Omega |\nabla  \phi|^2 dx}{\int_\Omega V u^2 dx}=\frac{d^2}{n^2}. $$ 
 The case of the exterior of a ball and the half-space are very likely similar.  The rest of items follow  same ideas from Theorem \ref{1absprop1}. As before, the details are a good exercise for the reader. 	
\end{proof}

\section{The Rellich and Hardy-Rellich inequality in $L^2$ setting}

\subsection{The Rellich inequality} Developments of the Rellich inequality have emerged a lot in the last decades, especially when we refer to optimal results for weighted $L^p$-versions. The literature on the topic is huge but a minimal bibliography could be found for instance in \cite{O, DH, CM, Mo, Me} and the references therein.  The aim of this section is  to present a simple proof of the Rellich inequality based on by now, the powerful method of super-solutions. 

\begin{thm}[Rellich inequality]\label{rellich} Let $d\geq 5$.
For any $u\in C_c^\infty(\mathbb{R}^d)$ it holds
\begin{equation}\label{ineqRellich}
\int_{\mathbb{R}^d} |\Delta u|^2 dx \geq \frac{d^2(d-4)^2}{16} \int_{\mathbb{R}^d} \frac{u^2}{|x|^4} dx.
\end{equation}
Moreover, the constant $\mu^\star:= \frac{d^2(d-4)^2}{16}$ is optimal.
\end{thm}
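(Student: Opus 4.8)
The plan is to follow the same super-solution philosophy used for the Hardy inequality: first \emph{guess} the extremal profile, then convert the guess into a sharp inequality by an expansion of squares, and finally test the Rayleigh quotient along a regularization of the extremal to obtain optimality. The natural candidate is the radial function $\phi(x)=|x|^{-(d-4)/2}$, which plays for $\Delta^2$ the role that $|x|^{-(d-2)/2}$ played for $-\Delta$ in \eqref{standard HI}. Applying the elementary identity $\Delta|x|^s=s(s+d-2)|x|^{s-2}$ twice, I would check that
$$\Delta^2\phi=\frac{d^2(d-4)^2}{16}\,\frac{\phi}{|x|^4},\qquad x\neq 0,$$
so that $\phi$ solves $(\Delta^2-\mu^\star|x|^{-4})\phi=0$ with $\mu^\star=d^2(d-4)^2/16$. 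This both singles out the conjectured optimal constant and indicates which auxiliary weighted Hardy inequality will turn out to be sharp.

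To prove \eqref{ineqRellich} I would use the \emph{expansion of the square}. For a parameter $a>0$ and $u\in C_c^\infty(\mathbb{R}^d)$, expanding the nonnegative quantity $\int_{\mathbb{R}^d}\bigl(\Delta u+a\,u|x|^{-2}\bigr)^2\,dx\geq 0$ gives
$$\int_{\mathbb{R}^d}|\Delta u|^2\,dx\geq -2a\int_{\mathbb{R}^d}\frac{u\,\Delta u}{|x|^2}\,dx-a^2\int_{\mathbb{R}^d}\frac{u^2}{|x|^4}\,dx.$$
Integrating the cross term by parts twice, using $\mathrm{div}\bigl(x|x|^{-4}\bigr)=(d-4)|x|^{-4}$, produces the identity $\int u\,\Delta u/|x|^2=-\int|\nabla u|^2/|x|^2-(d-4)\int u^2/|x|^4$, hence
$$\int_{\mathbb{R}^d}|\Delta u|^2\,dx\geq 2a\int_{\mathbb{R}^d}\frac{|\nabla u|^2}{|x|^2}\,dx+\bigl(2a(d-4)-a^2\bigr)\int_{\mathbb{R}^d}\frac{u^2}{|x|^4}\,dx.$$

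Because the coefficient of the gradient term is now positive, I may insert the weighted Hardy inequality $\int|\nabla u|^2/|x|^2\geq\frac{(d-4)^2}{4}\int u^2/|x|^4$, which is itself a super-solution statement attached to the \emph{same} profile $\phi=|x|^{-(d-4)/2}$ and is obtained by the identical ``two lines'' integration-by-parts and Cauchy--Schwarz argument as \eqref{standard HI}. Everything then reduces to maximizing the scalar function $a\mapsto\tfrac{(d-4)^2}{2}a+2(d-4)a-a^2$, whose maximum is attained at $a=d(d-4)/4$ and equals exactly $d^2(d-4)^2/16$; this yields \eqref{ineqRellich} with the claimed constant. Optimality would then follow as in the Hardy case by evaluating the Rayleigh quotient on regularizations $u_\epsilon$ of $\phi=|x|^{-(d-4)/2}$, truncated away from the origin and from infinity in the spirit of \eqref{min_seq}.

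The main obstacle is conceptual rather than computational: unlike the second-order setting, the bi-Laplacian admits no direct super-solution principle, since the maximum principle fails for $\Delta^2$, so one cannot simply invoke positivity of $\phi$ to conclude as in Lemma \ref{l1}. The super-solution here serves only to pin down $\mu^\star$ and the correct auxiliary inequality, while the expansion of squares does the real work. The two points requiring care are the \emph{sign convention} in the square (chosen so that the cross term contributes $+2a\int|\nabla u|^2/|x|^2$ with the positive sign needed to apply the weighted Hardy inequality) and the justification of the two integrations by parts near the singularity, which is legitimate precisely because $d\geq 5$ ensures $|x|^{-4}\in L^1_{loc}(\mathbb{R}^d)$; excising a small ball about the origin and checking that the boundary terms vanish makes this rigorous.
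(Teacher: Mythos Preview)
Your argument is correct and reaches the sharp constant, but it takes a different route from the paper. You expand $\int(\Delta u+a\,u|x|^{-2})^2\geq 0$, feed in the weighted Hardy inequality $\int|\nabla u|^2|x|^{-2}\geq\tfrac{(d-4)^2}{4}\int u^2|x|^{-4}$, and optimize the resulting quadratic in $a$. The paper instead establishes a genuine super-solution lemma for the bi-Laplacian (Proposition~\ref{auxresult}): the substitution $u=\phi v$ produces the exact identity
\[
\int_{\mathbb{R}^d}|\Delta u|^2\,dx-\int_{\mathbb{R}^d}\frac{\Delta^2\phi}{\phi}\,u^2\,dx=\int_{\mathbb{R}^d}\Bigl(|\phi\Delta v+2\nabla\phi\cdot\nabla v|^2-2\phi\,\Delta\phi\,|\nabla v|^2\Bigr)\,dx,
\]
whose right-hand side is nonnegative once $\phi>0$ \emph{and} $-\Delta\phi\geq 0$. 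So your claim that ``the bi-Laplacian admits no direct super-solution principle'' is a bit too pessimistic: one does exist, the missing maximum principle being replaced by the extra superharmonicity hypothesis, and the paper then optimizes $\phi=|x|^\alpha$ over the range $\alpha\in[-(d-2),0]$ on which that hypothesis holds. Your approach is more hands-on---a single square expansion plus the auxiliary Hardy bound and a scalar maximization---while the paper's buys an explicit remainder term and a template that transfers directly to other potentials $W$ by merely exhibiting a pair $(\phi,W)$ satisfying \eqref{conditions}.
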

In order to prove Theorem \ref{rellich} we need an auxiliary result.

\begin{prop}\label{auxresult}
Let $d\geq 5$ and $W\in L_{loc}^1(\mathbb{R}^d)$ such that $W$ is a continuous function in $\mathbb{R}^d\setminus\{0\}$.  Asssume $\phi$ is a $C^4$ function in $\mathbb{R}^d\setminus\{0\}$ such that
\begin{equation}\label{conditions}
\left\{
\begin{array}{ll}
\left(\Delta^2 - W \right)\phi(x) \geq 0, & \forall x\in \mathbb{R}^d\setminus\{0\}  \\
-\Delta \phi(x) \geq 0, &   \forall x\in \mathbb{R}^d\setminus \{0\}\\
\phi(x) >0,  & \forall x\in \mathbb{R}^d. \\
\end{array}
\right.
\end{equation}
Then
\begin{equation}\label{generalinequality}
\int_{\mathbb{R}^d} |\Delta u|^2 dx \geq \int_{\mathbb{R}^d}  W u^2 dx, \quad \forall u \in C_c^\infty(\mathbb{R}^d).
\end{equation}
\end{prop}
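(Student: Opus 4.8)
The plan is to imitate the first-order ``method of super-solutions'' (Lemma \ref{l1} and Proposition \ref{identi}), but now in a two-step, \emph{factorized} form adapted to the bi-Laplacian. The key observation is that $\int |\Delta u|^2 = \|\Delta u\|_{L^2}^2$, so if I set $w := -\Delta u$ I can try to reduce the fourth-order estimate to two successive second-order estimates. First I would apply the \emph{first-order} identity of Proposition \ref{identi} to the auxiliary function $\phi$ (viewed as a positive super-solution of $-\Delta$, which is exactly the second condition $-\Delta\phi \geq 0$) in order to bound $\int |\nabla u|^2$ or to generate a Hardy-type weight; but more directly, I expect the cleaner route is to prove the pointwise/quadratic-form inequality
\begin{equation*}
\int_{\mathbb{R}^d} |\Delta u|^2\,dx \;\geq\; \int_{\mathbb{R}^d} \frac{\Delta^2\phi}{\phi}\,u^2\,dx, \qquad \forall u\in C_c^\infty(\mathbb{R}^d\setminus\{0\}),
\end{equation*}
and then invoke the hypothesis $(\Delta^2 - W)\phi \geq 0$, i.e. $\Delta^2\phi/\phi \geq W$ (using $\phi>0$), to replace $\Delta^2\phi/\phi$ by $W$ on the right and obtain \eqref{generalinequality} on the punctured space. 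A density/closure argument, exactly as in the paragraph following Lemma \ref{l1}, upgrades test functions from $C_c^\infty(\mathbb{R}^d\setminus\{0\})$ to $C_c^\infty(\mathbb{R}^d)$.

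To establish the displayed fourth-order inequality I would proceed by iterating the second-order machinery. Writing $u = \phi v$ as in the proof of Proposition \ref{identi} and integrating by parts, one expands $\int |\Delta u|^2$ and tracks the terms. The natural intermediate quantity is $\int \phi^2 |\nabla(\Delta u / \phi)|^2$ or a Bessel-type ground-state substitution; concretely, I expect the identity to take the ``ground state representation'' shape
\begin{equation*}
\int_{\mathbb{R}^d} |\Delta u|^2\,dx - \int_{\mathbb{R}^d} \frac{\Delta^2\phi}{\phi}\,u^2\,dx \;=\; \int_{\mathbb{R}^d} \phi^2 \left| \mathcal{L}(u\phi^{-1}) \right|^2 dx \;+\; (\text{nonnegative cross terms}),
\end{equation*}
where $\mathcal{L}$ is a suitable second-order operator. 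The role of the extra hypothesis $-\Delta\phi \geq 0$ is precisely to guarantee that the cross terms produced by the double integration by parts carry the correct (nonnegative) sign; without superharmonicity of $\phi$ these terms would be indefinite and the argument would collapse. I would make this sign control explicit by pairing $-\Delta\phi \geq 0$ with a completed square, rather than trying to prove a bare pointwise factorization.

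The main obstacle, and the step I would spend the most care on, is controlling these boundary and cross terms in the double integration by parts so that they are manifestly nonnegative using only the two sign conditions $(\Delta^2 - W)\phi \geq 0$ and $-\Delta\phi \geq 0$. Unlike the first-order case in Proposition \ref{identi}, where a single completion of the square suffices, here the two integrations by parts generate mixed terms involving $\nabla\phi$, $\Delta\phi$ and second derivatives of $v$, and organizing them into a sum of squares plus a term proportional to $(-\Delta\phi)\,|\cdot|^2$ is delicate; the dimensional restriction $d\geq 5$ should enter exactly to ensure the relevant integrals converge and the integrations by parts near the singularity are justified. Once the nonnegative decomposition is in hand, discarding the square terms yields the inequality and the density argument finishes the proof.
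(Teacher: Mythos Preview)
Your plan is essentially the paper's own argument: the paper performs the substitution $u=\phi v$, integrates by parts, and arrives at the exact identity
\[
\int_{\mathbb{R}^d} |\Delta u|^2\,dx - \int_{\mathbb{R}^d} \frac{\Delta^2\phi}{\phi}\,u^2\,dx
= \int_{\mathbb{R}^d} |\phi\Delta v + 2\nabla\phi\cdot\nabla v|^2\,dx
+ \int_{\mathbb{R}^d} (-2\phi\Delta\phi)\,|\nabla v|^2\,dx,
\]
which is precisely the shape you predicted (a completed square in a second-order operator applied to $v=u\phi^{-1}$, plus a cross term rendered nonnegative by $-\Delta\phi\geq 0$), after which $\Delta^2\phi/\phi\geq W$ finishes the proof. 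Your anticipation of where each hypothesis enters, including the density step from $C_c^\infty(\mathbb{R}^d\setminus\{0\})$ to $C_c^\infty(\mathbb{R}^d)$, is accurate.
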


\begin{proof}
We employ the change of variables $u=\phi v$ and we obtain
\begin{align}\label{eq1}
\int_{\mathbb{R}^d} |\Delta u|^2 dx &= \int_{\mathbb{R}^d} \left(|\Delta \phi|^2 v^2 + \phi^2 |\Delta v|^2 + 4 |\nabla \phi \cdot \nabla v|^2+2 \phi v \Delta \phi \Delta v\right) dx \nonumber\\
&+  \int_{\mathbb{R}^d} \left(4 v \Delta \phi \nabla \phi \cdot \nabla v+ 4 \phi \Delta v \nabla \phi \cdot \nabla v - \frac{\Delta^2 \phi}{\phi} u^2  \right) dx.
\end{align}
On the other hand, integration by parts successively lead to
\begin{align}\label{eq2}
\int_{\mathbb{R}^d} \frac{\Delta^2 \phi}{\phi} u^2 dx &=\int_{\mathbb{R}^d} \phi v^2 \Delta^2 \phi= \int_{\mathbb{R}^d} \left(|\Delta \phi|^2 v^2 + 2 \phi v \Delta \phi \Delta v \right) dx \nonumber\\
&+ \int_{\mathbb{R}^d} \left(2 \phi \Delta \phi |\nabla v|^2 + 4 v \Delta \phi \nabla \phi \cdot \nabla v\right) dx.
\end{align}
Combining \eqref{eq1}-\eqref{eq2} we obtain
\begin{equation}\label{eq3}
\int_{\mathbb{R}^d} |\Delta u|^2 dx - \int_{\mathbb{R}^d} \left(\frac{\Delta^2 \phi}{\phi} \right)u^2 dx= \int_{\mathbb{R}^d} \left(|\phi\Delta v + 2 \nabla \phi \cdot \nabla v|^2- 2 \phi \Delta \phi |\nabla v|^2\right) dx,
\end{equation}
which is nonnegative.  Due to hypotheses \eqref{conditions} satisfied by $\phi$ we finally obtain \eqref{generalinequality}.
\end{proof}

\begin{proof}[Proof of Theorem \ref{rellich}]

We apply Proposition \ref{auxresult} for the pairs
 $(W(x), \phi(x))=(\mu/|x|^4, |x|^\alpha))$ where $\mu>0$ and $\alpha\in \mathbb{R}$ will be precise later. For such $\phi=\phi_\alpha$ we get the computations
$$\Delta \phi_\alpha= \left(\alpha^2 +\alpha(d-2) \right)|x|^{\alpha-2}, \quad \Delta^2 \phi_\alpha =\alpha (\alpha -2) (d-2 +\alpha) (d-4 +\alpha)|x|^{\alpha-4}.$$
Therefore, $\phi_\alpha$ verifies  \eqref{conditions} if $(\alpha, \lambda)$ is an \textit{admissible} pair, i.e. it verifies
\begin{equation}\label{admissible}
\left\{\begin{array}{ll}
\alpha (\alpha -2) (d-2 +\alpha) (d-4 +\alpha)-\mu\geq 0 & \\
\alpha^2+(d-2)\alpha\leq 0 & \\
\end{array}\right.
\end{equation}
The optimal Rellich inequality is the biggest $\mu>0$ for which there exists an admissible pair $(\alpha, \lambda)$ for \eqref{admissible}.  In view of these we consider the function $f:\mathbb{R} \rightarrow \mathbb{R}$ given by
$$f(\alpha)=\alpha (\alpha -2) (d-2 +\alpha) (d-4 +\alpha),$$ with the aim of maximizing $f$. Next we obtain the following table of variations. 
\begin{figure}[h]\caption{The variation of $f$}\label{fig1}
	\begin{center}
		\begin{tabular}{|c|  c  c  c  c  c|}
			\hline %\\ [0.2pt]
			$\alpha$  & %\vspace{0.2cm}
			 $-\infty$ \quad  & $\alpha_1=\frac{-(d-4)-\sqrt{d^2-4d+8}}{2}$ \quad & $\alpha_2=-\frac{d-4}{2}$ \quad & $\alpha_3=\frac{-(d-4)+\sqrt{d^2-4d+8}}{2}$ &   $\infty$ \\ [1pt]
			\hline % \\ [1pt]  
			$f'(\alpha)$ &  -  &  - 0  + & + 0 -  &  - 0 + & + \\ [1pt]
			\hline %\\ [1pt]
			$f(\alpha)$ & $\infty$ & $\searrow  f(\alpha_1) \nearrow$ &  $ \nearrow f(\alpha_2) \searrow $    & $\searrow f(\alpha_3) \nearrow$  &   $\infty$ \\  [1pt]
			\hline 
		\end{tabular}
	\end{center}
\end{figure}

%\begin{tabular}{|l|l|}
%	\hline
%	7C0 & hexadecimal \\ [10pt] %\medskip
%	\hline
%	3700 & octal \\ %\cline{2-4}
%	11111000000 & binary \\
%	\hline \hline
%	1984 & decimal \\
%	\hline
%\end{tabular}

On the other hand,  the second condition in \eqref{conditions} holds for $\phi_\alpha$ if and only if $\alpha\in [-(d-2), 0]$, thus we have to restrict the function $f$ to this interval and to maximize it. Finally, we get
\begin{equation}
\max_{\alpha\in [ -(d-2), 0]}f(\alpha)=f\left(-\frac{(d-2)}{4}\right)=\frac{d^2(d-4)^2}{16}. 
\end{equation}
Therefore $$\left(W(x)=\frac{d^2(d-4)^2}{16|x|^4},\quad  \phi(x)=|x|^{-\frac{d-4}{2}}\right)$$ is the admisible pair which provides the best constant $\mu^\star$. We have
$$\Delta^2 \phi-W\phi=\Delta^2 \phi-\frac{\mu^\star}{|x|^4}\phi=0, \quad \mu^\star=\frac{d^2(d-4)^2}{16}.$$
The optimality of $\mu^\star$ can be proved by a classical approximation argument ($\epsilon$-regularization of the extremal function $\phi(x)=|x|^{-\frac{d-4}{2}}$). Then the proof is completed. 
\end{proof}

\subsection{The Hardy-Rellich inequality}

It occurs that the Hardy-Rellich inequality stated in \eqref{First HR inequality}, which is trivially deduced by applying the Hardy inequality in the whole space, is not optimal.   In fact, we have
\begin{thm}[Hardy-Rellich inequality]\label{HardyBilaplacian} Let $d\geq 3$. Then 
	\begin{equation}\label{optconst} 
	\int_{\mathbb{R}^d} |\Delta u|^2 dx \geq \mu^\star(d) \int_{\mathbb{R}^d} \frac{|\nabla  u|^2}{|x|^2} dx, \quad u\in C_c^\infty(\mathbb{R}^d),
	\end{equation}
	with the optimal constant
	\begin{equation}
	\mu^\star(d)=\left\{\begin{array}{ll}
	\frac{d^2}{4}, & d\geq 5 \\[5pt]
	3, & d=4 \\[5pt]
	\frac{25}{36}, & d=3. \\[5pt]
	\end{array}\right.
	\end{equation}
\end{thm}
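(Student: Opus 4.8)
The plan is to reduce \eqref{optconst} to a one–parameter family of one–dimensional inequalities by separating variables in spherical harmonics, and then to minimise the resulting best constants over the angular quantum number. A reduction is needed because the cheap argument — applying the classical Hardy inequality \eqref{standard HI} to each partial derivative $\partial_{x_i}u$ and summing, which in view of $\sum_{i}\int|\nabla\partial_{x_i}u|^2\,dx=\int|\Delta u|^2\,dx$ reproduces exactly \eqref{First HR inequality} — only yields the non–optimal constant $(d-2)^2/4$, discarding the information that $\nabla u$ is curl–free. Writing $u(x)=\sum_{k\geq 0}u_k(r)\phi_k(\omega)$, $r=|x|$, $\omega=x/|x|$, where $\{\phi_k\}$ are the $L^2(S^{d-1})$–orthonormal eigenfunctions of $-\Delta_{S^{d-1}}$ with eigenvalues $\lambda_k=k(k+d-2)$, orthogonality kills the cross terms and gives
\begin{align*}
\int_{\mathbb{R}^d}|\Delta u|^2\,dx&=\sum_{k\geq 0}\int_0^\infty\Big(u_k''+\tfrac{d-1}{r}u_k'-\tfrac{\lambda_k}{r^2}u_k\Big)^2 r^{d-1}\,dr,\\
\int_{\mathbb{R}^d}\frac{|\nabla u|^2}{|x|^2}\,dx&=\sum_{k\geq 0}\int_0^\infty\Big((u_k')^2+\tfrac{\lambda_k}{r^2}u_k^2\Big)r^{d-3}\,dr.
\end{align*}
Thus \eqref{optconst} holds with constant $\mu$ if and only if each radial term on the left dominates $\mu$ times the corresponding term on the right, and $\mu^\star(d)=\inf_{k\geq 0}C_k$, with $C_k$ the best constant in the $k$-th one–dimensional inequality.

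Next I would solve each one–dimensional problem. Every term is scale–invariant, so the substitution $t=\log r$ together with the critical weight $u_k(r)=r^{(4-d)/2}w(t)$ turns the radial operator and the right–hand form into constant–coefficient operators in $t$. Plancherel's theorem then identifies $C_k$ with the infimum over the frequency $\tau\in\mathbb{R}$ of the ratio of the two real, nonnegative symbols; a direct computation gives
$$C_k=\inf_{\tau\in\mathbb{R}}\frac{\big(\tfrac{d(d-4)}{4}+\lambda_k+\tau^2\big)^2+4\tau^2}{\tfrac{(d-4)^2}{4}+\lambda_k+\tau^2}.$$
Minimising in $\tau$ is governed by a quadratic in $\tau^2$ whose discriminant $-16\lambda_k$ is nonpositive, so the ratio is nondecreasing in $\tau^2$ and the infimum is attained at $\tau=0$. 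This collapses the constant to the clean expression
$$C_k=\frac{\big(\lambda_k+\tfrac{d(d-4)}{4}\big)^2}{\lambda_k+\tfrac{(d-4)^2}{4}},\quad k\geq 1,\qquad C_0=\frac{d^2}{4}.$$
(The radial value $C_0=d^2/4$ is also the $\lambda\to 0$ limit of this formula when $d\neq 4$, and is checked directly for $d=4$.) As an aside, each individual $C_k$ could equally be produced by a radial super–solution in the spirit of Lemma \ref{l1} and Proposition \ref{auxresult}, the extremal being the power $r^{(4-d)/2}$; but the Plancherel route is what pins the exact value.

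The crux is the discrete minimisation $\mu^\star(d)=\inf_{k\geq 0}h(\lambda_k)$, where $h(\lambda)=(\lambda+\tfrac{d(d-4)}{4})^2/(\lambda+\tfrac{(d-4)^2}{4})$. Studying $h$ on $\lambda>0$, its only possible interior minimum sits at $\lambda^\ast=\tfrac{(d-4)(8-d)}{4}$, and $h$ is increasing for $\lambda>\lambda^\ast$. Since the admissible angular eigenvalues are $\lambda_0=0$ and $\lambda_k\geq\lambda_1=d-1$ for $k\geq 1$, the position of $\lambda_1$ relative to $\lambda^\ast$ decides everything, and this is precisely where the dimensional trichotomy is born: for $d\geq 5$ one checks $\lambda_1=d-1>\lambda^\ast$ and $h(\lambda_1)>h(0)=d^2/4$, so the infimum is the radial value $C_0=d^2/4$; for $d=4$ the formula degenerates to $h(\lambda)=\lambda$ while $C_0=4$, whence the minimum is $C_1=\lambda_1=3$; and for $d=3$ one finds $C_1=h(2)=\tfrac{(2-3/4)^2}{2+1/4}=\tfrac{25}{36}<\tfrac94=C_0$, so $\mu^\star(3)=25/36$. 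I expect this mode-by-mode bookkeeping — verifying that no intermediate $k$ undercuts the claimed minimiser, especially in the low dimensions where the extremal mode is $k=1$ rather than the radial one — to be the only genuinely delicate point.

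Finally, optimality of $\mu^\star(d)$ follows by exhibiting a minimising sequence concentrated in the extremal mode: for $d\geq 5$ an $\varepsilon$–regularisation of the radial extremal $|x|^{(4-d)/2}$ (exactly as in the proof of Theorem \ref{rellich}), and for $d=3,4$ a regularisation of a first–degree spherical harmonic times that power, e.g. $x_1|x|^{(2-d)/2}$, cut off near the origin and near infinity. Passing to the limit drives the corresponding Rayleigh quotient down to $C_k$, which completes the proof.
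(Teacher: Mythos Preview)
Your approach is essentially correct and is in fact the spherical–harmonics/Plancherel method of \cite{TZ} and \cite{C_PRSE} that the paper cites but deliberately does \emph{not} reproduce: the paper only supplies an elementary proof for $d\geq 8$, based on the integration–by–parts identity
\[
(d-4)\int_{\mathbb{R}^d}\frac{|\nabla u|^2}{|x|^2}\,dx
=-4\int_{\mathbb{R}^d}\frac{|x\cdot\nabla u|^2}{|x|^4}\,dx
+2\int_{\mathbb{R}^d}\frac{x\cdot\nabla u}{|x|^2}\,\Delta u\,dx,
\]
followed by Cauchy--Schwarz and an optimisation in a single parameter $\epsilon$. That argument is shorter but cannot reach $d\leq 7$; yours handles all $d\geq 3$ at once, at the cost of the mode–by–mode analysis. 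Your symbol computation and the identification $C_k=\big(\lambda_k+\tfrac{d(d-4)}{4}\big)^2/\big(\lambda_k+\tfrac{(d-4)^2}{4}\big)$ via the discriminant $-16\lambda_k$ are correct, and the minimising sequences you propose coincide with those in Theorem~\ref{minimization}.

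There is one small slip in the discrete minimisation. The function $h(\lambda)=(\lambda+a)^2/(\lambda+b)$, with $a=\tfrac{d(d-4)}{4}$ and $b=\tfrac{(d-4)^2}{4}$, has critical points at $\lambda=-a$ \emph{and} at $\lambda=a-2b=\lambda^\ast$. For $d\geq 5$ one has $-a<0$, so your claim that the only positive critical point is $\lambda^\ast$ is correct there. But for $d=3$, $a=-3/4$, so $-a=3/4>0$ is a genuine interior minimum of $h$ (indeed $h(3/4)=0$), and your assertion that ``$h$ is increasing for $\lambda>\lambda^\ast=-5/4$'' is false on $(0,3/4)$. This does not damage the final answer, because $\lambda_1=2>3/4$ and $h$ \emph{is} increasing on $[2,\infty)$, so $\min_{k\geq 1}C_k=C_1=25/36$ as you claim; but the monotonicity justification needs this extra critical point to be acknowledged.
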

As usual, by optimal constant we understand
\begin{equation}\label{optim}
\mu^\star(d)=\inf_{u\in C_{c}^{\infty}(\mathbb{R}^d)\setminus \{0\}}\frac{\int_{\mathbb{R}^d}|\Delta u|^2 dx}{\int_{\mathbb{R}^d} |\nabla u|^2 /|x|^2dx}, 
\end{equation}
 
 To the best of our knowledge inequality \eqref{optconst} was firstly analyzed in the case of radial functions in \cite{Adi} where the authors showed that the best constant is $\mu^\star_{radial}(d)=d^2/4$ for any $d\geq 4$ (they do not give an answer for $d=3$). Soon after that Theorem \ref{HardyBilaplacian} was proved  in \cite{TZ} in higher dimensions $d\geq 5$ (the radial restriction was removed from \cite{Adi}). 
 The method in \cite{TZ} applies spherical harmonics decomposition but the proof fails for lower dimensions $d\in \{3, 4\}$.   This result was subsequently completed in lower dimensions $d\in \{ 3, 4\}$ independently in \cite{B} and  \cite{GM2} applying quite different techniques: Fourier transform tools, respectively  a quite general theory based on so-called Bessel pairs which allowed to obtain the most classical functional inequalities and their improvements in the literature.   It is also worth mentioning the work in \cite{HT} which complements the above papers with Rellich-type inequalities for vector fields. In the recent paper \cite{C_PRSE} we also refined the method implemented in \cite{TZ}, based on spherical harmonics decomposition, to give an easy and compact proof of the optimal Hardy-Rellich inequality \eqref{optconst} in any dimension $d\geq 3$. In addition, we provided minimizing sequences which were not explicitly mentioned in the previous quoted papers in lower dimensions $d \in \{3, 4\}$, emphasizing their symmetry breaking (see Theorem \ref{minimization} below).  In order to state the following theorem we need to introduce some preliminary facts.  
 First let us consider the Hilbert space $\mathcal{D}^{2,2}(\mathbb{R}^d)$ to be the completion of $C_c^{\infty}(\mathbb{R}^d)$ in the norm
 $$\|u\|=\left(\int_{\mathbb{R}^d} |\Delta u|^2 dx\right)^{1/2}. $$
 Of course, $\|\cdot\|$ is a norm on $C_c^\infty(\mathbb{R}^d)$ due to the weak maximum principle for harmonic functions.  In addition, we consider a smooth cut-off function $g\in C_c^\infty(\mathbb{R})$  such
 $$g(r)=\left\{\begin{array}{ll}
 1, & \textrm{ if } |r|\leq 1 \\
 0, & \textrm{ if } |r|\geq 2.
 \end{array}\right. $$

 \begin{thm}[Minimizing sequences]\label{minimization}
 	Let $\epsilon>0$ and define the sequence \begin{equation}\label{minseq}
 	u_\epsilon(x)=\left\{
 	\begin{array}{cc}
 	|x|^{-\frac{d-4}{2}+\epsilon}g(|x|), & \textrm{ if } d\geq 5 \\
 	|x|^{-\frac{d-4}{2}+\epsilon}g(|x|)\phi_1\left(\frac{x}{|x|}
 	\right),  & \textrm{ if } d\in \{3, 4\}, \\
 	\end{array}
 	\right.
 	\end{equation}
 	where $\phi_1$ is a spherical harmonic function of degree 1 such that $\|\phi_1\|_{L^2(S^{d-1})}=1$ ($S^{d-1}$ denotes the unit $(d-1)$-dimensional sphere centered at the origin in $\mathbb{R}^d$).
 	%\end{enumerate}
 	Then $\{u_\epsilon\}_{\epsilon>0}\subset \mathcal{D}^{2,2}(\mathbb{R}^d)$ is a minimizing sequence for $\mu^\star(d)$, i.e.
 	\begin{equation}\label{limit-seq}
 	\frac{\int_{\mathbb{R}^d}|\Delta u_\epsilon|^2 dx}{\int_{\mathbb{R}^d} |\nabla u_\epsilon|^2/|x|^2 dx }\searrow \mu^\star(d), \textrm{ as } \epsilon \searrow 0.
 	\end{equation}
 	Moreover,  the constant $\mu^\star(d)$ is not attained  in $\mathcal{D}^{2,2}(\mathbb{R}^d)$ (there are no minimizers in $\mathcal{D}^{2,2}(\mathbb{R}^d)$). 
 \end{thm}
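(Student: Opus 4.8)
The plan is to exploit the single-mode structure of $u_\epsilon$ through the spherical harmonics decomposition that already underlies the proof of Theorem \ref{HardyBilaplacian}. Writing $r=|x|$ and decomposing on $S^{d-1}$, a separated function $u(x)=f(r)\phi_k(x/|x|)$ with $\phi_k$ a degree-$k$ spherical harmonic (eigenvalue $\lambda_k=k(k+d-2)$, normalized by $\|\phi_k\|_{L^2(S^{d-1})}=1$) satisfies, with $L_k f := f''+\frac{d-1}{r}f'-\frac{\lambda_k}{r^2}f$,
\begin{equation}\label{reduce1}
\begin{aligned}
\int_{\mathbb{R}^d}|\Delta u|^2\,dx&=\int_0^\infty |L_k f|^2\,r^{d-1}\,dr,\\
\int_{\mathbb{R}^d}\frac{|\nabla u|^2}{|x|^2}\,dx&=\int_0^\infty\Big(|f'|^2+\frac{\lambda_k}{r^2}|f|^2\Big)r^{d-3}\,dr.
\end{aligned}
\end{equation}
The sequences in \eqref{minseq} are exactly of this form with $k=0$ (so $\lambda_0=0$) when $d\ge5$ and $k=1$ (so $\lambda_1=d-1$) when $d\in\{3,4\}$, and in each case $f=f_\epsilon(r)=r^{\beta_\epsilon}g(r)$ with $\beta_\epsilon=-\frac{d-4}{2}+\epsilon$.

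To extract the limit I would isolate the contribution of $\{r\le1\}$, where $g\equiv1$ and hence $f_\epsilon=r^{\beta_\epsilon}$. Setting $A_k(\beta):=\beta^2+(d-2)\beta-\lambda_k$, one has $L_k(r^{\beta_\epsilon})=A_k(\beta_\epsilon)r^{\beta_\epsilon-2}$ and $|f_\epsilon'|^2+\lambda_k r^{-2}|f_\epsilon|^2=(\beta_\epsilon^2+\lambda_k)r^{2\beta_\epsilon-2}$. Since $2\beta_\epsilon+d-5=-1+2\epsilon$, both integrands over $\{r\le1\}$ share the factor $\int_0^1 r^{-1+2\epsilon}\,dr=\frac{1}{2\epsilon}$, while the contributions from the annulus $\{1\le r\le2\}$ (carrying the derivatives of $g$) stay bounded as $\epsilon\searrow0$, because there $f_\epsilon$ and its derivatives converge to those of $r^{-(d-4)/2}g(r)$. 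Thus numerator and denominator each equal a constant times $\frac{1}{2\epsilon}$ plus an $O(1)$ remainder, and the quotient tends to $A_k(\beta_0)^2/(\beta_0^2+\lambda_k)$ with $\beta_0=-\frac{d-4}{2}$. A short computation gives $A_k(\beta_0)=-\frac{d(d-4)}{4}-\lambda_k$, so this limit equals $d^2/4$ for $k=0$ and $(d^2-4)^2/\big(4(d^2-4d+12)\big)$ for $k=1$, which evaluates to $3$ at $d=4$ and $25/36$ at $d=3$; in every case it is precisely $\mu^\star(d)$. Because Theorem \ref{HardyBilaplacian} forces each quotient to stay $\ge\mu^\star(d)$, the convergence is from above, giving the asserted $\searrow$.

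For the non-attainment I would use the same decomposition for an arbitrary competitor $u=\sum_k f_k(r)\phi_k\in\mathcal{D}^{2,2}(\mathbb{R}^d)$: by \eqref{reduce1} both Rayleigh integrals split as orthogonal sums over $k$, and each mode contributes a one-dimensional quotient bounded below by its own optimal constant $\mu_k^\star\ge\mu^\star(d)$, with equality $\mu_{k_0}^\star=\mu^\star(d)$ only at the optimal degree $k_0$ ($k_0=0$ if $d\ge5$, $k_0=1$ if $d\in\{3,4\}$). Hence a minimizer would have to be supported on that single mode, with $f_{k_0}$ extremal for the one-dimensional problem; but its Euler--Lagrange equation is an equidimensional (Euler) ODE whose only critical solution is the borderline power $r^{\beta_0}$, and the computation above shows $\int_0^\infty|L_{k_0}(r^{\beta_0})|^2\,r^{d-1}\,dr$ diverges logarithmically at both endpoints, so $r^{\beta_0}\notin\mathcal{D}^{2,2}(\mathbb{R}^d)$. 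This contradiction proves that $\mu^\star(d)$ is not attained.

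The step I expect to be the main obstacle is the bookkeeping in the numerator $\int|\Delta u_\epsilon|^2$: applying $\Delta$ to $r^{\beta_\epsilon}g$ produces cross terms in $g'$ and $g''$, and one must verify cleanly that these are all supported in the compact annulus $\{1\le r\le2\}$ and remain $O(1)$ as $\epsilon\searrow0$, so that they are genuinely negligible against the $\frac{1}{2\epsilon}$ divergence from $\{r\le1\}$; the analogous but simpler control is required for the denominator.
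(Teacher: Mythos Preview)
The paper does not actually prove Theorem \ref{minimization}: immediately after stating it, the text says ``Both Theorems \ref{HardyBilaplacian} and \ref{minimization} require non trivial proofs and we suggest to follow the quoted papers for their details,'' pointing to \cite{C_PRSE} and \cite{TZ}, whose method is the spherical harmonics decomposition. Your approach is exactly that method, so there is nothing to contrast at the level of strategy. Your identification of the relevant mode ($k=0$ for $d\ge5$, $k=1$ for $d\in\{3,4\}$), your reduction \eqref{reduce1}, and your computation of the limiting quotient $A_k(\beta_0)^2/(\beta_0^2+\lambda_k)$ are all correct and match the values of $\mu^\star(d)$; the $O(1)$ control of the annulus terms against the $\frac{1}{2\epsilon}$ blow-up is routine, as you anticipate.

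The one place where your sketch is thinner than what the cited proofs do is the non-attainment step. Saying that the Euler--Lagrange equation is an equidimensional ODE ``whose only critical solution is the borderline power $r^{\beta_0}$'' is not quite right as stated: the fourth-order Euler equation has several power (and possibly $\log$-power) solutions, and one must argue that any putative minimizer in $\mathcal{D}^{2,2}$ is forced, via a remainder identity of the type \eqref{eq3} (adapted mode by mode), to coincide with a multiple of $r^{\beta_0}$, which then fails the integrability check you correctly perform. You also tacitly use that $\mu_k^\star>\mu^\star(d)$ for $k\neq k_0$ to exclude the other modes; this is part of the analysis in \cite{TZ,C_PRSE} underlying Theorem \ref{HardyBilaplacian}, so it is legitimate to invoke it, but it should be stated explicitly. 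With those two points tightened, your argument is complete and aligned with the referenced proofs.
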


Both Theorems \ref{HardyBilaplacian} and \ref{minimization} require non trivial proofs and we suggest to follows the quoted papers for their details. However, next we provide an easy proof of inequality  \eqref{optconst} in dimensions $d\geq 8$.
\begin{proof}[Proof of \eqref{optconst} for $d\geq 8$]
By integration by parts we get (starting from the mixed term)
\begin{equation}\label{identIP2}
(d-4)\int_{\mathbb{R}^d} \frac{|\nabla u|^2}{|x|^2} dx =-4 \int_{\mathbb{R}^d} \frac{|x\cdot \nabla u|^2}{|x|^4} dx + 2 \int_{\mathbb{R}^d} \frac{x\cdot \nabla u}{|x|^2} \Delta u dx.
\end{equation}
As a consequence of \eqref{identIP2} and Cauchy-Schwartz inequality we obtain
\begin{equation}\label{weakerineq}
\int_{\mathbb{R}^d} |\Delta u|^2 dx \geq \frac{d^2}{4}\int_{\mathbb{R}^d} \frac{|x\cdot \nabla u|^2}{|x|^4} dx, \quad \forall u \in C_c^\infty(\mathbb{R}^d),
\end{equation}
which holds for any $d\geq 4$ (we leave the few details to the reader). 

Let $\epsilon\in (0, 1/4]$ and $d\geq 4$. Then from \eqref{identIP2},  and the arithmetic inequality we get
\begin{align}\label{form}
(d-4)\int_{\mathbb{R}^d} \frac{|\nabla u|^2}{|x|^2} dx &\leq -4 \int_{\mathbb{R}^d} \frac{|x\cdot \nabla u|^2}{|x|^4} dx +\frac{1}{\epsilon} \int_{\mathbb{R}^d}  \frac{|x\cdot \nabla u|^2}{|x|^4} dx +\epsilon \int_{\mathbb{R}^d} |\Delta u|^2 dx\nonumber\\
&= \left[\left(\frac{1}{\epsilon} -4\right)\frac{4}{d^2}+\epsilon \right] \int_{\mathbb{R}^d} |\Delta u|^2 dx.
\end{align}
Let us consider the function $f:[0, 1/4]\rightarrow \mathbb{R}$ given by
$$f(\epsilon):=\left(\frac{1}{\epsilon} -4\right)\frac{4}{d^2}+\epsilon.$$
it is obvious that $f $  has as a critical point $\epsilon=2/d\in (0, 1/4]$ if $d\geq 8$. Therefore
$$\min_{\epsilon\in (0, 1/4]}f(\epsilon)=f\left(\frac 2 d \right)=\frac{4(d-4)}{d^2}$$
and taking $\epsilon=2/d$ in \eqref{form} we conclude the proof.

\end{proof}

\medskip

\noindent{\bf Acknowledgement.} The author has been partially founded by a grant of Ministry of Research and Innovation, CNCS-UEFISCDI Romania, project number PN-III-P1-1.1-TE-2016-2233, within PNCDI III.

\vskip5pc

\end{document}